\newtheorem{theorem}{Theorem}[section]
\newtheorem{lemma}[theorem]{Lemma}
\newtheorem{proposition}[theorem]{Proposition}
\newtheorem{corollary}[theorem]{Corollary}
\theoremstyle{definition}
\newtheorem{definition}[theorem]{Definition}
\theoremstyle{remark}
\newtheorem{remark}[theorem]{Remark}
\numberwithin{equation}{section}
\begin{document}
\setcounter{page}{1}

\title[Weighted EP Banach algebra elements]{Factorizations of weighted EP Banach space operators and Banach algebra elements}

\author[E. Boasso, D.S. Djordjevi\'c, D. Mosi\' c]{Enrico Boasso, Dragan S. Djordjevi\'c and Dijana Mosi\'c}

\subjclass[2010]{Primary 46H05; Secondary 47A68.}

\keywords{Weighted Banach space operator, weighted Banach algebra element, weighted
Moore-Penrose inverse, hermitian element, positive element.}

\begin{abstract}Weighted EP Banach space operators and Banach algebra elements are
characterized using different kinds of factorizations. The results
presented extend well-known characterizations of (weighted) EP
matrices, (weighted) EP Hilbert space operators and (weighted) EP
$C^*$-algebra elements.
\end{abstract} \maketitle

\section{Introduction and Preliminaries}

\noindent A square matrix $A$ is said to be EP if $N(A)=N(A^*)$.
Since a necessary and sufficient
condition for a matrix $A$ to be EP is the fact that
 $A$ commutes with its Moore-Penrose inverse, the notion under consideration
has been extended to Hilbert space operators and
$C^*$-algebra elements, see \cite{Pe, HM1, HM2, K1, Be}.\par

\indent In the context of Banach algebras the notion of Moore-Penrose inverse was
introduced by V. Rako\v cevi\' c in \cite{R1} (see also \cite{ B1}).
In the recent past EP Banach space operators and EP Banach algebra elements,
i.e., Moore-Penrose invertible operators or elements of
an algebra such that they commute with their Moore-Penrose
inverse, were introduced and characterized, see \cite{B1, B2R, B3}. It is worth noting that EP
objects generalize normal and hermitian objects, see \cite[Theorems 3.1
and 3.3]{B2R}.\par

\indent In addition, in the recent work by Y. Tian and H. Wang \cite{TW} the
notion of weighted EP matrices (matrices that commute with their
weighted Moore-Penrose inverse) was introduced. What is more, weighted EP Banach algebra elements
were characterized in \cite{BMDj}. \par

Note that one of the main lines of research concerning EP objects
consists in characterizing them through factorizations. In fact,
EP matrices, EP Hilbert and Banach space operators and EP
$C^*$-algebra and Banach algebra elements have been characterized
through several different kind of factorizations. \par

\indent The objective of this work is to characterize weighted EP
Banach space operators and weighted EP Banach algebra elements
using factorizations; actually, three different kinds of
factorizations will be considered (see sections 2-4). Several reasons motivate this
research. First of all, considering the general notion of weighted
EP Banach algebra element, results proved for (weighted) EP
matrices, Hilbert space operators and $C^*$-algebra elements can
be recovered and generalized. It is
also worth noting that due to the lack of involution on a Banach
algebra, and in particular on the Banach algebra of bounded and
linear maps defined on a Banach space, the proofs not only are
different from the ones known for matrices, Hilbert space
operators or $C^*$-algebra elements, but also they give a new
insight into the cases where the involution does exist. In fact,
the results and proofs presented do not depend on a particular
norm, the Euclidean norm, but they hold for any norm. In
particular, the results considered in this work also apply to
(weighted) EP matrices defined using an arbitrary norm on a finite
dimensional vector space, extending in this way the results of
\cite{TW} to any norm.\par

From now on $X$ and $Y$ will denote two Banach spaces and $L(X,Y)$
will stand for the Banach space of all bounded and linear maps defined on $X$ and
with values in $Y$. As usual, when $X=Y$, $L(X,Y)$ will be denoted by $L(X)$. If $T\in L(X,Y)$, then $N(T)\subseteq X$
and $R(T)\subseteq Y$ will stand for the null space and the range of $T$, respectively. In addition,
$I_X\in L(X)$ will denote the identity map. Moreover, $X^*$ will denote the dual space of $X$ and if
$T\in L(X)$, then $T^*\in L(X^*)$ will stand for the adjoint of $T\in L(X)$.\par

\indent On the other hand, $A$ will denote a complex unital Banach
algebra with unit $1$. In addition, the set of all invertible
elements of $A$ will be denoted by $A^{-1}$. If $a\in A$, then
$L_a \colon A\to A$ and $R_a\colon A\to A$ will denote the maps
defined by left and right multiplication, respectively: $
L_a(x)=ax$ and $R_a(x)=xa, $ where $x\in A$. Moreover, the
following notation will be used: $N(L_a)= a^{-1}(0)$,
$R(L_a)=aA$, $N(R_a)= a_{-1}(0)$,
$R(R_a)=Aa$ .\par

\indent Recall that an element $a\in A$ is called \it{regular},
\rm if it has a \it{generalized inverse}, \rm namely if there
exists $b\in A$ such that $ a=aba. $ Furthermore, a generalized
inverse $b$ of a regular element $a\in A$ will be called
\it{normalized}, \rm if $b$ is regular and $a$ is a generalized
inverse of $b$, equivalently, $a=aba$ and $b=bab$.
Note that if $b$ is a generalized inverse of $a$, then $c=bab$
is a normalized generalized inverse of $a$.\par

\indent Next follows the key notion in the definition of (weighted)
Moore-Penrose invertible Banach algebra elements.\par

\begin{definition}\label{def1}
Given a unital Banach algebra $A$, an element $a\in A$ is said to be \it hermitian, \rm
if $\parallel \exp(ita)\parallel =1$, for all $ t\in\Bbb R$.
\end{definition}

\indent Concerning equivalent definitions and the main properties of hermitian Banach
algebra elements and Banach space operators, see for example \cite{P,BD, D}.
Recall that if $A$ is a $C^*$-algebra, then
$a\in A$ is hermitian if and only if $a$ is self-adjoint,
see \cite[Proposition 20, Chapter I, Section 12]{BD}.
Given a unital Banach algebra $A$, the set of all Hermitian
elements of $A$ will be denoted by $H(A)$.
\par

\indent Now the notion of Moore-Penrose invertible Banach algebra element
will be recalled.\par

\begin{definition}\label{def2}
 Let $A$ be a unital Banach algebra and consider $a\in A$. If there exists
$x\in A$ such that $x$ is a normalized generalized inverse of $a$ and
$ax$ and $xa$ are hermitian, then $x$
will be said to be the Moore-Penrose inverse of $a$, and it will be
denoted by $a^{\dag}$.
\end{definition}

\indent Recall that according to \cite[Lemma 2.1]{R1}, there is at
most one Moore-Penrose inverse. Concerning the Moore-Penrose
inverse in Banach algebras, see \cite{R1,B1,B2R,B3}. For the
original definition of the Moore-Penrose inverse for matrices, see
\cite{Pe}.\par

\indent Next the definition of EP Banach algebra elements will be recalled.\par

\begin{definition} Given a unital Banach algebra $A$, the element $a\in A$ is said to be \it EP, \rm if $a^{\dag}$ exists and commutes with
$a$.
\end{definition}

\indent To recall the notion of weighted Moore-Penrose invertible Banach algebra elements,
some preparation is needed.\par

\indent Let $A$ be a complex unital Banach algebra and consider $a\in A$.
The element $a$ will be said to be \it positive\rm, if $V(a)\subset \mathbb R_+$,
where $V(a)=\{f(a)\colon f\in A^*, \parallel f\parallel \le 1, f(1)=1\}$ (\cite[Definition 5, Chapter V, Section 38]{BD}).
Denote $A_+$ the set of all positive elements of $A$. Note that necessary and sufficient for $a\in A$ to be positive
is that $a$ is hermitian and $\sigma (a)\subset \mathbb R_+$ (\cite[Definition 5, Chapter V, Section 38]{BD}).
Recall that according to \cite[Lemma 7, Chapter V, Section 38]{BD}, if $c\in A_+$,
then there exists $d\in A_+$ such that $d^2=c$. Moreover, according to \cite[Theorem]{G},
the square root of $c$ is unique. In particular, the square root of $c$ will be denoted
by $c^{1/2}$. For the definition and equivalent conditions of positive $C^*$-algebra elements,
see \cite[Definition 3.1 and Theorem 3.6, Chapter VIII, Section 3]{C}.
\par

\indent Given a complex unital Banach algebra $A$ and $u\in
A^{-1}\cap A_+$, denote by $A^u=(A^u, \parallel \cdot
\parallel_u)$ the complex unital Banach algebra with underlying
space $A$ and norm $\parallel x\parallel_u=\parallel
u^{1/2}xu^{-1/2}\parallel$. In the following definition weighted Moore-Penrose invertible
elements will be introduced.\par

\begin{definition}\label{def3}Let $A$ be a complex unital Banach algebra and consider
$e$ and $f$ two positive and invertible elements in $A$. The element $a\in A$
will be said to be \it weighted Moore-Penrose invertible with weights $e$ and $f$, \rm
if there exists $b\in A$ such that $b$ is a normalized generalized inverse of
$a$ and $ab$ (respectively $ba$) is a hermitian element of $(A^e, \parallel \cdot \parallel_e)$
(respectively $(A^f, \parallel \cdot \parallel_f)$.
\end{definition}

If the weighted Moore-Penrose inverse of $a$ exists, then it is
unique and so it will be denoted by $a_{e,f}^{\dag}$ (see \cite[Proposition 2.5]{BMDj}).
According to  \cite{BMDj}, if $A$ is a $C^*$-algebra, then the conditions in
Definition \ref{def3} are equivalent to the ones that characterize the usual
weighted Moore-Penrose inverse.\par

\indent Next the definition of weighted EP Banach algebra
element will be recalled.\par

\begin{definition}\label{def111} Given a unital Banach algebra $A$ and $e$, $f\in A$ two invertible
and positive elements, $a\in A$ is said to be \it weighted EP with
weights $e$ and $f$, \rm if $a^{\dag}_{e,f}$ exists and commutes with
$a$.
\end{definition}

\begin{remark}\rm Let $A$ be a unital Banach algebra and consider $a\in A$. \par
\noindent (i). Recall that $a\in A$ is said to be
 \it group invertible, \rm if there exists $b\in A$, a normalized generalized inverse of $a$, such that $ab=ba$.
It is well known that if the group inverse of $a$ exists, then it is unique; in addition, in this case it is denoted by $a^\sharp$.
Note that given $e$, $f\in A$ two invertible and positive elements,
necessary and sufficient for $a\in A$ to be weighted EP with weights $e$ and $f$ is that $a$ is group invertible and
$aa^\sharp$ is a hermitian element of $(A^e, \parallel \cdot \parallel_e)$ and $a^\sharp a$ is a hermitian element of $(A^f, \parallel \cdot \parallel_f)$.
Naturally, in this case, $a^\sharp=a^{\dag}_{e,f}$. So that, roughly speaking, weighted EP elements are group invertible elements with two
extra condition, i.e., the idempotents $aa^\sharp$ and $a^\sharp a$ must be hermitian elements of two associated Banach algebras.\par
\noindent (ii). Note that in Definition \ref{def111} the weights $e$, $f\in A$ need not to be ordered. In fact, according to a result
of hermitian elements (\cite[Theorem 4.4(i)]{D}), $a\in A$ is weighted EP with weights $e$ and $f$ if and only if
$a\in A$ is weighted EP with weights $f$ and $e$ (see Corollary \ref{cor5.9} and Theorem \ref{thm5.5}).
\end{remark}

\indent To study the factorization that will be considered in the
next section, the notion of weighted Moore-Penrose inverse for
operators defined between different Banach spaces needs to be
introduced. However, first some preliminary results will be
recalled.\par

\begin{remark}\label{rema21}\rm Let $X$ and $Y$ be two Banach spaces and let $T\in L(X,Y)$. If
$S\in L(Y,X)$ is a normalized generalized inverse of $T$, then it is not difficult to prove that
$TS\in L(Y)$ and $ST\in L(X)$ are idempotents, $R(TS)=R(T)$,
$N(TS)=N(S)$, $R(ST)=R(S)$, $N(ST)=N(T)$, $X=R(S)\oplus N(T)$ and
$Y=R(T)\oplus N(S)$.
\end{remark}

Next the notion of weighted Moore--Penrose inverse
for operators defined between different Banach spaces will be introduced.\par

\begin{definition}\label{def4} Let $X$ and $Y$ be two Banach spaces and let $T\in L(X,Y)$. Consider $E\in L(Y)$ and $F\in L(X)$
two invertible and positive operators. If there exists $S\in
L(Y,X)$ such that $TST=T$, $STS=S$, $TS\in H(L(Y)^E)$ and $ST\in
H(L(X)^F)$, then $T$ will be said to be \it weighted Moore-Penrose
invertible with weights $E$ and $F$.\rm
\end{definition}

\indent In first place it will be shown that given $T\in L(X,Y)$, there is at most one
operator $S\in L(Y,X)$ satisfying Definition \ref{def4}. In the following Lemma, ideas similar to the ones
in \cite[Lemma 2.1]{R1} will be used. In addition, it will be used the following result. Given $Z$ a Banach space and
$E$, $F\in L(Z)$ two hermitian idempotents such that $R(E)=R(F)$, then $E=F$ (\cite[Theorem 2.2]{P}). \par

\begin{lemma} \label{lem5}Let $X$ and $Y$ be two Banach spaces and let $T\in L(X,Y)$. Consider $E\in L(Y)$ and $F\in L(X)$
two invertible and positive operators. If $S_i\in L(Y,X)$ complies
the four conditions of Definition \ref{def4}, $i=1,2$, then
$S_1=S_2$.
\end{lemma}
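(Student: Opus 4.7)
My plan is to adapt the standard uniqueness argument for the (unweighted) Moore--Penrose inverse from \cite[Lemma 2.1]{R1}. The strategy splits naturally into two steps: first prove the two ``halves'' $TS_1=TS_2$ and $S_1T=S_2T$, and then conclude via the algebraic chain
\[
S_1=S_1TS_1=S_2TS_1=S_2TS_2=S_2,
\]
where the inner equalities use $S_1T=S_2T$ and $TS_1=TS_2$ in succession, while the outer equalities use $S_iTS_i=S_i$ from Definition \ref{def4}.

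For the first half, Remark \ref{rema21} gives that $TS_1$ and $TS_2$ are idempotents in $L(Y)$ with common range $R(T)$, and by hypothesis both are hermitian in the weighted algebra $L(Y)^E$. To bring the cited uniqueness criterion for hermitian idempotents into play (which is stated for $L(Z)$ with its standard operator norm), I would renorm $Y$ by $y\mapsto \|E^{1/2}y\|$; writing $Y_E$ for the resulting Banach space, a direct computation shows that the operator norm on $L(Y_E)$ equals $\|\cdot\|_E$, so that $L(Y)^E$ and $L(Y_E)$ coincide isometrically as Banach algebras. Hence $TS_1,TS_2$ are hermitian idempotents in $L(Y_E)$ with equal range $R(T)$, and \cite[Theorem 2.2]{P} forces $TS_1=TS_2$.

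For the second half, Remark \ref{rema21} now supplies equal null spaces $N(S_1T)=N(T)=N(S_2T)$ in place of equal ranges. Since the hermitian elements of any unital Banach algebra form a real closed subspace that contains the unit, $I_X-S_iT$ is also a hermitian idempotent in $L(X)^F$, and its range is precisely $N(S_iT)=N(T)$. Renorming $X$ by $x\mapsto \|F^{1/2}x\|$ in the same way and applying \cite[Theorem 2.2]{P} once more gives $I_X-S_1T=I_X-S_2T$, that is, $S_1T=S_2T$, completing the plan.

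The only genuine subtlety is the bookkeeping required to recognize the weighted algebras $L(Y)^E$ and $L(X)^F$ as operator algebras on $Y$ and $X$ under equivalent norms so that the unweighted hermitian-idempotent uniqueness of \cite{P} applies; once this identification and the ``pass to $I-P$'' trick for the null-space case are in hand, the rest is routine algebra.
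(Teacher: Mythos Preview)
Your proof is correct and follows essentially the same route as the paper's own argument: both establish $TS_1=TS_2$ via \cite[Theorem~2.2]{P} applied to hermitian idempotents with common range $R(T)$, then pass to the complementary idempotents $I_X-S_iT$ (using that hermitian elements form a real subspace containing the unit, which is \cite[Theorem~4.4(i)]{D} in the paper) to obtain $S_1T=S_2T$, and conclude by the standard chain. Your explicit renorming step identifying $L(Y)^E$ with $L(Y_E)$ is a detail the paper leaves implicit, but otherwise the arguments coincide.
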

\begin{proof} Since $R(TS_1)=R(T)=R(TS_2)$ and $TS_i\in H(L(Y)^E)$, $i=1, 2$, according to \cite[Theorem 2.2]{P},
$TS_1=TS_2$. In addition, since $R(I_X-S_1T)=N(S_1T)=N(T)=N(S_2T)=R(I_X-S_2T)$ and $ST_i\in
H(L(X)^F)$, $i=1, 2$,
according to \cite[Theorem 44(i)]{D} and \cite[Theorem 2.2]{P}, $I_X-S_1T=I_X-S_2T$, equivalently $S_1T=S_2T$.
Thus,
$$
S_1=S_1TS_1=S_1TS_2=S_2TS_2=S_2.
$$
\end{proof}

\indent The following result will be used in the next section.\par

\begin{lemma}\label{lem10} Let $X$ and $Y$ be two Banach spaces and let $T\in L(X,Y)$. Consider $E\in L(Y)$ and $F\in L(X)$
two invertible and positive operators. Necessary and sufficient
for $T^{\dag}_{E,F}$ to exit is that there exist two idempotents
$P\in H(L(Y)^E)$ and $Q\in H(L(X)^F)$ such that $R(P)=R(T)$ and
$N(Q)=N(T)$.
\end{lemma}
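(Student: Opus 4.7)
The plan is to prove both directions; necessity is immediate from Definition \ref{def4} together with Remark \ref{rema21}, so the content of the lemma lies in sufficiency.

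For necessity, suppose $S = T^{\dag}_{E,F}$ exists. I would simply take $P = TS$ and $Q = ST$. By Definition \ref{def4} these are hermitian in the appropriate weighted algebras, and by Remark \ref{rema21} they are idempotents with $R(TS) = R(T)$ and $N(ST) = N(T)$.

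For sufficiency, assume two such idempotents $P$ and $Q$ are given. The key observation is that $P$ and $Q$ yield topological direct sum decompositions $Y = R(P) \oplus N(P) = R(T) \oplus N(P)$ and $X = R(Q) \oplus N(Q) = R(Q) \oplus N(T)$, and in particular $R(Q)$ and $R(T)$ are closed subspaces. I would then show that the restriction $T_0 = T|_{R(Q)} \colon R(Q) \to R(T)$ is a topological isomorphism: injectivity follows from $R(Q) \cap N(T) = \{0\}$; surjectivity follows from the fact that for any $x \in X$, writing $x = Qx + (I_X - Q)x$ with $(I_X - Q)x \in N(Q) = N(T)$ gives $Tx = TQx \in T(R(Q))$; boundedness of $T_0^{-1}$ then follows from the open mapping theorem applied to the Banach spaces $R(Q)$ and $R(T)$.

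I would then \emph{define} $S = T_0^{-1} P \in L(Y,X)$ and verify the four conditions of Definition \ref{def4}. Direct computation gives $TS = P$ (since $T_0^{-1}$ is a right inverse of $T$ on $R(T)$ and $Py \in R(T)$), and $ST = Q$ (since $PT = T$, using $R(T) = R(P)$ and idempotence, so $ST = T_0^{-1} T$, which on the decomposition $X = R(Q) \oplus N(T)$ is the projection onto $R(Q)$ along $N(T)$, i.e.\ $Q$). Consequently $TST = PT = T$ and $STS = Q \cdot T_0^{-1}P = T_0^{-1}P = S$ because $T_0^{-1}P$ takes values in $R(Q)$ and $Q|_{R(Q)} = I$. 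Since $TS = P \in H(L(Y)^E)$ and $ST = Q \in H(L(X)^F)$ by hypothesis, all four defining conditions hold, so $T^{\dag}_{E,F} = S$.

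The main conceptual obstacle is recognizing that, although the hermitian/positivity hypotheses on $E$ and $F$ are crucial for \emph{uniqueness} (Lemma \ref{lem5}), they play no role in the \emph{construction}: once the idempotents $P$ and $Q$ are produced with the correct range and null space, the operator $S$ is forced by the pair of decompositions they induce, and the hermitian requirement on $TS$ and $ST$ is automatic because these products coincide with $P$ and $Q$ themselves. The only technical point needing care is invoking the open mapping theorem, which is why it matters that $R(P)$ and $R(Q)$ (being ranges of bounded idempotents) are closed.
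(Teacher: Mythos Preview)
Your proof is correct. The paper's own proof is a one-line instruction to adapt \cite[Theorem 2.7]{BMDj} to the operator setting, and what you have written is precisely that adaptation spelled out in full: construct the normalized generalized inverse $S$ from the pair of complementary decompositions induced by $P$ and $Q$, and then observe that $TS=P$ and $ST=Q$ automatically satisfy the hermitian conditions.
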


\begin{proof} Adapt the proof of \cite[Theorem 2.7]{BMDj}
to the conditions of this lemma.
\end{proof}
\section{Factorization $a=bc$}

In this section, given a unital Banach algebra $A$ and $e$, $f\in A$ invertible and positive, weighted EP elements of the form $a=bc$ will be
characterized, where $a$, $b$, $c\in A$, $a$ is weighted Moore-Penrose invertible with weights $e$ and $f$,
 $b^{-1}(0)=\{0\}$ and $cA=A$. However, in first place the Banach space operator case will be studied.\par

\begin{theorem}\label{thm1f} Let $X$ and $Y$ be two Banach spaces and consider $E,H\in L(X)$ and $F\in L(Y)$ three invertible
positive operators. Let $T\in L(X)$ such that $T^\dag_{E,H}$
exists and suppose that there exist $C\in L(X,Y)$ and $B\in L(Y,X)$ such that $C$ is
surjective, $B$ is injective and $T=BC$. Then the following
statements hold.
\begin{itemize}

\item[\rm (i)] There exists $B^\dag_{E,F}\in L(X,Y)$ such that
$B^\dag_{E,F}B=I_Y$.

\item[\rm(ii)] There exists $C^\dag_{F,H}\in L(Y,X)$ such that
$CC^\dag_{F,H}=I_Y$.

\item[\rm(iii)] $T^\dag_{E,H}=C^\dag_{F,H}B^\dag_{E,F}$,
$TT^\dag_{E,H}=BB^\dag_{E,F}$, $T^\dag_{E,H}T=C^\dag_{F,H}C$,
$B^\dag_{E,F}=CT^\dag_{E,H}$, $C^\dag_{F,H}=T^\dag_{E,H}B$,
$TC^\dag_{F,H}=B$ and $B^\dag_{E,F}T=C$.
\end{itemize}
\end{theorem}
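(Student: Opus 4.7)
The plan is to reduce everything to Lemma~\ref{lem10}, Remark~\ref{rema21}, and the uniqueness statement in Lemma~\ref{lem5}. The key observation is that the factorization hypotheses translate into identifications of the range and null space of $T$: since $C$ is surjective and $B$ is injective, one has
$R(T)=R(BC)=B(R(C))=R(B)$ and $N(T)=N(BC)=C^{-1}(N(B))=N(C)$.
Because $T^{\dag}_{E,H}$ exists, Lemma~\ref{lem10} supplies an idempotent $P\in H(L(X)^E)$ with $R(P)=R(T)=R(B)$ and an idempotent $Q\in H(L(X)^H)$ with $N(Q)=N(T)=N(C)$.

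For part (i), I would apply Lemma~\ref{lem10} to $B\in L(Y,X)$ with weights $E\in L(X)$ and $F\in L(Y)$. The idempotent on the range side can be taken to be $P$ itself, since $R(P)=R(B)$. On the domain side, since $B$ is injective, any idempotent $Q_B\in H(L(Y)^F)$ with $N(Q_B)=N(B)=\{0\}$ must satisfy $Q_B=I_Y$, and $I_Y$ is trivially hermitian in $L(Y)^F$ because $\exp(itI_Y)=e^{it}I_Y$ has norm $1$. Lemma~\ref{lem10} then yields $B^{\dag}_{E,F}$. By Remark~\ref{rema21}, $B^{\dag}_{E,F}B$ is an idempotent with $N(B^{\dag}_{E,F}B)=N(B)=\{0\}$, hence equals $I_Y$. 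Part (ii) is entirely symmetric: apply Lemma~\ref{lem10} to $C\in L(X,Y)$ with weights $F,H$, using $I_Y$ (hermitian in $L(Y)^F$) as the range-side idempotent (since $R(C)=Y$) and $Q$ as the domain-side idempotent (since $N(Q)=N(C)$); Remark~\ref{rema21} then gives $CC^{\dag}_{F,H}=I_Y$ because the range of this idempotent is all of $Y$.

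For part (iii), set $S=C^{\dag}_{F,H}B^{\dag}_{E,F}$ and verify that $S$ satisfies Definition~\ref{def4} for $T$ with weights $E,H$. Using (i) and (ii),
\[
TS = BC\cdot C^{\dag}_{F,H}B^{\dag}_{E,F} = B\,(CC^{\dag}_{F,H})\,B^{\dag}_{E,F} = BB^{\dag}_{E,F},
\]
which lies in $H(L(X)^E)$ by the defining property of $B^{\dag}_{E,F}$, and
\[
ST = C^{\dag}_{F,H}B^{\dag}_{E,F}\cdot BC = C^{\dag}_{F,H}\,(B^{\dag}_{E,F}B)\,C = C^{\dag}_{F,H}C,
\]
which lies in $H(L(X)^H)$ by the defining property of $C^{\dag}_{F,H}$. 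The identities $TST=T$ and $STS=S$ follow by the same two cancellations $B^{\dag}_{E,F}B=I_Y$ and $CC^{\dag}_{F,H}=I_Y$. By the uniqueness statement in Lemma~\ref{lem5}, $S=T^{\dag}_{E,H}$, which already gives the first three formulas in (iii). The remaining four identities ($B^{\dag}_{E,F}=CT^{\dag}_{E,H}$, $C^{\dag}_{F,H}=T^{\dag}_{E,H}B$, $TC^{\dag}_{F,H}=B$, $B^{\dag}_{E,F}T=C$) are one-line computations obtained by inserting the factorization $T=BC$ or the formula $T^{\dag}_{E,H}=C^{\dag}_{F,H}B^{\dag}_{E,F}$ and cancelling with $B^{\dag}_{E,F}B=I_Y$ or $CC^{\dag}_{F,H}=I_Y$.

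The only subtle step is the conceptual one at the start, namely recognising that the hermitian-idempotent data required by Lemma~\ref{lem10} for $B$ and $C$ can be assembled for free from the data given by $T^{\dag}_{E,H}$ together with the trivial idempotents $I_Y$; once this is in place, the rest of the argument is formal manipulation inside the algebra, and no analytic difficulty arises from the absence of an involution on $L(X)$.
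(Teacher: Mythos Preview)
Your proposal is correct and follows essentially the same route as the paper's proof: obtain the hermitian idempotents $P$ and $Q$ from the existence of $T^{\dag}_{E,H}$, feed $P$ (respectively $Q$) together with $I_Y$ into Lemma~\ref{lem10} to produce $B^{\dag}_{E,F}$ (respectively $C^{\dag}_{F,H}$), and then verify directly that $S=C^{\dag}_{F,H}B^{\dag}_{E,F}$ satisfies Definition~\ref{def4}, invoking Lemma~\ref{lem5} for uniqueness. The only cosmetic difference is that for $CC^{\dag}_{F,H}=I_Y$ the paper appeals to Palmer's result on hermitian idempotents with equal range, whereas you use the more elementary observation that an idempotent with full range is the identity; both are fine.
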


\begin{proof} Note that according to \cite[Theorem 2.7]{BMDj}, there exist two idempotents
$P$,$Q\in L(X)$ such that $P\in H(L(X)^E)$, $Q\in H(L(X)^H)$,
$R(P)=R(T)$ and $N(Q)=N(T)$.\par

\indent (i). Since $B$ is injective and $R(P)=R(T)=R(B)$,
according to Lemma \ref{lem10}, $B^\dag_{E,F}$ exists.
In fact, $I_Y\in H(L(Y)^H)$ and $P\in H(L(X)^E)$ are two idempotents such that
$N(B)=N(I_Y)$ and $R(B)=R(P)$.
In addition, $R(I_Y-B^\dag_{E,F}B)=N(B^\dag_{E,F}B)=N(B)=0$.\par

\indent (ii). Similarly, since $I_Y\in H(L(Y)^F)$ and $Q\in H(L(X)^H)$ are two
idempotents such that $R(C)=Y=R(I_Y)$ and $N(C)=N(T)=N(Q)$, according to
Lemma \ref{lem10}, $C^\dag_{F,H}$ exists.
Further, since $CC^\dag_{F,H}$ and $I_Y\in H(L(Y)^F)$ are idempotents such
that $R(CC^\dag_{F,H})=R(C)=R(I_Y)$, according to \cite[Theorem 2.2]{P},
$CC^\dag_{F,H}=I_Y$.\par

\indent (iii). Consider $S=C^\dag_{F,H}B^\dag_{E,F}$. It is not
difficult to prove that $T=TST$, $S=STS$, $TS=BB^\dag_{E,F}$,
$ST=C^\dag_{F,H}C$. However, since $BB^\dag_{E,F}\in H(L(X)^E)$
and $C^\dag_{F,H}C\in H(L(X)^H)$ are idempotent operators,
according to Definition \ref{def4} and Lemma \ref{lem5},
$T^\dag_{E,H}=C^\dag_{F,H}B^\dag_{E,F}$. The remaining identities
can be derived from what has been proved and statements (i)-(ii).
\end{proof}

\begin{remark}\rm (a). Let $X$ be a Banach space and consider $E,H\in L(X)$ two invertible
positive operators. Let $T\in L(X)$ such that $T^\dag_{E,H}$. Note that the decomposition of $T$ as in
Theorem \ref{thm1f} is always possible. In fact, since $R(T)$ is closed, $T=\overline{T}\pi$,
where $\pi\colon X\to X/N(T)=Y$ is the canonical quotient map and $\overline{T}\colon Y\to X$
is the factorization of $T$. In addition, consider any invertible and positive operator $F\in L(Y)$,
for example $F=I_Y$.\par
\noindent (b). Note that in Theorem \ref{thm1f} the identity $T^\dag_{E,H}=C^\dag_{F,H}B^\dag_{E,F}$
is a particular reverse order law for $T=BC$.
\end{remark}
\indent In the following theorem weighted EP operators of the form $T=BC$ will
be characterized.\par

\begin{theorem}\label{thm2f} Under the same hypotheses of Theorem \ref{thm1f}, the following statements are
equivalent.
\begin{itemize}
\item[\rm (i)] $T$ is weighted EP with weights $E$ and $H$.

\item[\rm(ii)] $BB^\dag_{E,F}=C^\dag_{F,H}C$.

\item[\rm(iii)] $R(B)=R(C^\dag_{F,H})$ and $N(B^\dag_{E,F})=N(C)$.

\item[\rm(iv)] $(I_X-C^\dag_{F,H}C)B=0$ and
$C(I_X-BB^\dag_{E,F})=0$.

\item[\rm(v)] $B^\dag_{E,F}(I_X-C^\dag_{F,H}C)=0$ and
$(I_X-BB^\dag_{E,F})C^\dag_{F,H}=0$.

\item[\rm(vi)] There exists an isomorphism $U\in L(Y)$ such that
$C=UB^\dag_{E,F}$ and $B=C^\dag_{F,H}U$.

\item[\rm(vii)] There exist a surjective map $U_1\in L(Y)$ and an
injective map $U_2\in L(Y)$ such that $C=U_2B^\dag_{E,F}$ and
$B=C^\dag_{F,H}U_1$.

\item[\rm(viii)] There exist $U_3,U_4,U_5,U_6\in L(Y)$ such that
$C=U_5B^\dag_{E,F}$, $B^\dag_{E,F}=U_6C$, $B=C^\dag_{F,H}U_3$ and
$C^\dag_{F,H}=BU_4$.

\item[\rm(ix)] There exist a surjective map $U_7\in L(Y)$ and an
injective map $U_8\in L(Y)$ such that $B^\dag_{E,F}=U_8C$ and
$C^\dag_{F,H}=BU_7$.
\end{itemize}
\end{theorem}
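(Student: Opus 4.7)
The plan is to take (ii) as the hub and connect every other item to it, relying throughout on the identities $B^\dag_{E,F}B=I_Y$, $CC^\dag_{F,H}=I_Y$, $TT^\dag_{E,H}=BB^\dag_{E,F}$ and $T^\dag_{E,H}T=C^\dag_{F,H}C$ supplied by Theorem \ref{thm1f}. The equivalence (i) $\Leftrightarrow$ (ii) is immediate, since Definition \ref{def111} asks exactly for $TT^\dag_{E,H}=T^\dag_{E,H}T$.

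For (ii) $\Leftrightarrow$ (iii), I would observe that $BB^\dag_{E,F}$ and $C^\dag_{F,H}C$ are idempotents whose ranges are $R(B)$ and $R(C^\dag_{F,H})$ and whose kernels are $N(B^\dag_{E,F})$ and $N(C)$ (using injectivity of $B$ and surjectivity of $C$, respectively); since an idempotent is determined by its range and kernel, (ii) and (iii) coincide. The equivalences (ii) $\Leftrightarrow$ (iv) and (ii) $\Leftrightarrow$ (v) are pure algebra: multiplying (ii) on the right by $B$ or on the left by $C$ gives (iv), while left multiplication by $B^\dag_{E,F}$ or right multiplication by $C^\dag_{F,H}$ gives (v); the converses follow by combining the two identities in (iv) or (v) and reapplying the cancellations $B^\dag_{E,F}B=I_Y$ and $CC^\dag_{F,H}=I_Y$.

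For (vi)--(ix), the key remark is that (ii) is precisely what is needed to make $U:=CB\in L(Y)$ an isomorphism, with inverse $V:=B^\dag_{E,F}C^\dag_{F,H}\in L(Y)$. Indeed, under (ii) one has $UV=C(BB^\dag_{E,F})C^\dag_{F,H}=C(C^\dag_{F,H}C)C^\dag_{F,H}=I_Y$ and $VU=I_Y$ symmetrically, together with $UB^\dag_{E,F}=C$ and $C^\dag_{F,H}U=B$. This establishes (vi), and the same choice $U_3=U_5=U$, $U_4=U_6=U_7=U_8=V$ verifies (viii) and (ix), while (vii) is a trivial weakening of (vi).

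For the converses (vii), (viii), (ix) $\Rightarrow$ (ii), the strategy is to multiply each given factorization on the appropriate side by $B$ or $C$, using $B^\dag_{E,F}B=I_Y$ and $CC^\dag_{F,H}=I_Y$ to pin down the unknown operators. Case (viii) $\Rightarrow$ (ii) is the cleanest: multiplying $C=U_5B^\dag_{E,F}$ by $B$ on the right and $B=C^\dag_{F,H}U_3$ by $C$ on the left gives $U_3=U_5=CB$, and then $C^\dag_{F,H}C=C^\dag_{F,H}(CB)B^\dag_{E,F}=(C^\dag_{F,H}CB)B^\dag_{E,F}=BB^\dag_{E,F}$. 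In (vii) the same identification $U_1=U_2=CB$ is available, but now the hypotheses that $U_2$ is injective and $U_1$ surjective, together with the open mapping theorem, are needed to conclude that $CB$ is invertible, after which (ii) reads off as $BB^\dag_{E,F}=B(CB)^{-1}C=C^\dag_{F,H}C$. The case I expect to be the main bookkeeping obstacle is (ix), where only $B^\dag_{E,F}=U_8C$ and $C^\dag_{F,H}=BU_7$ are given: multiplying by $B$ and $C$ respectively forces $U_8(CB)=I_Y$ and $(CB)U_7=I_Y$, and the one-sided hypotheses ($U_8$ injective, $U_7$ surjective) have to be combined with these identities and the open mapping theorem to force $U_7=U_8=(CB)^{-1}$, from which (ii) again follows.
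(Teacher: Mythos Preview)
Your proposal is correct. The overall architecture---using (ii) as the hub, reading (i)$\Leftrightarrow$(ii) off Theorem~\ref{thm1f}(iii), and building $U=CB$ with inverse $V=B^\dag_{E,F}C^\dag_{F,H}$ to reach (vi)--(ix)---is exactly what the paper does. The main difference is in how the implications (iv)$\Rightarrow$(ii) and (v)$\Rightarrow$(ii) are handled: the paper argues geometrically, passing through (iii) and then using the direct-sum decompositions $X=R(T)\oplus N(T^\dag_{E,H})=R(T^\dag_{E,H})\oplus N(T)$ to upgrade the inclusions $R(T)\subseteq R(T^\dag_{E,H})$ and $N(T^\dag_{E,H})\subseteq N(T)$ to equalities; you instead multiply the two identities in (iv) (resp.\ (v)) by $B^\dag_{E,F}$ and $C^\dag_{F,H}$ and cancel, which is shorter and avoids the decomposition entirely. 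For (vii)--(ix) the paper simply asserts that each implies (iii) and leaves the verification to the reader, whereas you spell out the identifications $U_i=CB$ or $U_i=(CB)^{-1}$ explicitly.

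One small simplification in your (ix)$\Rightarrow$(ii): once you have $U_8(CB)=I_Y$ and $(CB)U_7=I_Y$, the element $CB$ already has a left and a right inverse, so $U_8=U_8(CB)U_7=U_7$ and $CB$ is invertible outright. The injectivity of $U_8$, surjectivity of $U_7$, and the open mapping theorem are not needed here (they matter in (vii), where only one-sided information is available).
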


\begin{proof} According to Theorem \ref{thm1f}, statements (i) and (ii) are equivalents.
In addition, according to Remark \ref{rema21}, statement (ii) implies statement (iii).
On the other hand, if statement (iii) holds, then
$R(BB^\dag_{E,F})=R(B)=R(C^\dag_{F,H})=R(C^\dag_{F,H}C)$
and $N(BB^\dag_{E,F})=N(B^\dag_{E,F})=N(C)=N(C^\dag_{F,H}C)$.
However, since $BB^\dag_{E,F}$ and $C^\dag_{F,H}C$ are idempotents,
statement (ii) holds.\par

Suppose that the statement (iii) holds. Since
$N(I_X-C^\dag_{F,H}C)=R(C^\dag_{F,H}C)=R(C^\dag_{F,H})=R(B)$ and
$R(I_X-BB^\dag_{E,F})=N(BB^\dag_{E,F})=N(B^\dag_{E,F})=N(C)$,
statement (iv) holds.

On the other hand, observe that if statement (iv) holds, then $R(T)=R(B)\subseteq
N(I_X-C^\dag_{F,H}C)=R(C^\dag_{F,H})=R(T^{\dag}_{E,H})$ and
$N(T^{\dag}_{E,H})=N(B^\dag_{E,F})=R(I_X-BB^\dag_{E,F})\subseteq
N(C)=N(T)$. However, since according to Remark \ref{rema21}
$$
X=R(T)\oplus N(T^{\dag}_{E,H})=R(T^{\dag}_{E,H})\oplus N(T),
$$
it is not difficult to prove that $R(T)=R(T^{\dag}_{E,H})$ and
$N(T^{\dag}_{E,H})=N(T)$. As a result,
$R(TT^{\dag}_{E,H})=R(T^{\dag}_{E,H}T)$ and
$N(TT^{\dag}_{E,H})=N(T^{\dag}_{E,H}T)$. Therefore,
$TT^{\dag}_{E,H}=T^{\dag}_{E,H}T$.

A similar argument proves that statement (iii) implies statement (v), which in turn implies
statement (i).\par

Next suppose that statement (i) holds and consider $U=CB\in L(Y)$. Then, according to statement (iv),
$C=UB^\dag_{E,F}$ and $B=C^\dag_{F,H}U$. In order
to prove that $U\in L(Y)$ is an isomorphism, define
$Z=B^\dag_{E,F}C^\dag_{F,H}\in L(Y)$. Now well, according to statement (iv)
and to Theorem \ref{thm1f}(ii), $UZ=I_Y$. In addition, according again to
statement (iv) and Theorem \ref{thm1f}(i), $ZU=I_Y$. \par

Clearly, statement (vi) implies statements (vii)-(ix). On the
other hand, it is not difficult to prove that statement (vii)
(respectively statements (viii) and (ix)) implies statement (iii).
\end{proof}

\indent Next the Banach algebra case will be studied. Firstly some
preliminary facts need to be considered.\par

\begin{theorem}\label{thm3f} Let $A$ be a unital Banach algebra and consider three invertible and positive elements $e,f,h\in A$
and $a\in A$ such that $a^\dag_{e,h}$ exists. Suppose that there exist $b,c\in
A$ such that $(b)^{-1}(0)=\{0\}$, $cA=A$ and $a=bc$. Then,
the following statements hold:
\begin{itemize}

\item[\rm (i)] There exists $b^\dag_{e,f}\in A$ such that
$b^\dag_{e,f}b=1$.

\item[\rm(ii)] There exists $c^\dag_{f,h}\in A$ such that
$cc^\dag_{f,h}=1$.

\item[\rm(iii)] $a^\dag_{e,h}=c^\dag_{f,h}b^\dag_{e,f}$,
$aa^\dag_{e,h}=bb^\dag_{e,f}$, $a^\dag_{e,h}a=c^\dag_{f,h}c$,
$b^\dag_{e,f}=ca^\dag_{e,h}$, $c^\dag_{f,h}=a^\dag_{e,h}b$, $ac^\dag_{f,h}=b$
and $b^\dag_{e,f}a=c$.
\end{itemize}
\end{theorem}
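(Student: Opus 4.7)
My plan is to work directly inside $A$, guided by the left-regular representation approach that would reduce the statement to Theorem \ref{thm1f}. Under that reduction one sets $T=L_a$, $B=L_b$, $C=L_c$, and weights $E=L_e$, $F=L_f$, $H=L_h$ in $L(A)$: the hypothesis $b^{-1}(0)=\{0\}$ becomes injectivity of $L_b$, $cA=A$ becomes surjectivity of $L_c$, and existence of $a^\dag_{e,h}$ delivers $(L_a)^\dag_{L_e,L_h}=L_{a^\dag_{e,h}}$. Applying Theorem \ref{thm1f} would then produce $(L_b)^\dag_{L_e,L_f}$ and $(L_c)^\dag_{L_f,L_h}$, and its part (iii) would reveal these operators as left multiplications. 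This makes the right candidates for the Banach algebra inverses transparent, so I would bypass the operator detour and set
\[
b^\dag_{e,f}:=c\,a^\dag_{e,h}, \qquad c^\dag_{f,h}:=a^\dag_{e,h}\,b,
\]
and then verify the required identities inside $A$.

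The entire argument then pivots on the single algebraic identity
\[
c\,a^\dag_{e,h}\,b = 1.
\]
To establish it, I would start from $a\,a^\dag_{e,h}\,a=a$, substitute $a=bc$ to get $b\bigl(c\,a^\dag_{e,h}\,bc-c\bigr)=0$, use injectivity to cancel $b$ on the left, then pick $c'\in A$ with $cc'=1$ (available from $cA=A$) to cancel $c$ on the right. With this identity in hand, both $b^\dag_{e,f}b=1$ and $cc^\dag_{f,h}=1$ are immediate, so the ``$f$-hermiticity'' conditions are trivially fulfilled; the other two hermiticity conditions are $bb^\dag_{e,f}=aa^\dag_{e,h}\in H(A^e)$ and $c^\dag_{f,h}c=a^\dag_{e,h}a\in H(A^h)$, which are built into $a^\dag_{e,h}$. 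The normalized generalized inverse axioms follow using also $a^\dag_{e,h}aa^\dag_{e,h}=a^\dag_{e,h}$, and all of (iii) unwinds mechanically from the same identity (for instance $ac^\dag_{f,h}=aa^\dag_{e,h}b=b$ on multiplying $ca^\dag_{e,h}b=1$ on the left by $b$, and symmetrically $b^\dag_{e,f}a=ca^\dag_{e,h}a=c$ on multiplying on the right by $c$).

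The only real obstacle is notational bookkeeping: pairing weights with the correct sides, and noting the convenient coincidence that both products calling for hermiticity in $A^f$ reduce to $1$, so no property of $f$ beyond its invertibility is ever used. Once the identity $ca^\dag_{e,h}b=1$ is proved, no further analytic or spectral input is required, and the result is essentially a direct consequence of the generalized inverse identities for $a^\dag_{e,h}$ together with the one-sided inversion afforded by the injectivity and surjectivity hypotheses on $b$ and $c$.
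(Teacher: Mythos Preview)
Your proposal is correct and lands on exactly the same candidates $b^\dag_{e,f}=ca^\dag_{e,h}$ and $c^\dag_{f,h}=a^\dag_{e,h}b$ as the paper, and the verification of the four weighted Moore--Penrose axioms and the identities in (iii) proceeds along the same lines. The one genuine difference is how the pivotal identity $c\,a^\dag_{e,h}\,b=1$ is obtained: the paper actually invokes Theorem~\ref{thm1f} at the operator level (applied to $L_a=L_bL_c$ with weights $L_e,L_f,L_h$) and then reads off $cc'=1$ by evaluating $L_c(L_c)^\dag_{L_f,L_h}=I_A$ at $1$, whereas you derive it directly in $A$ from $a a^\dag_{e,h}a=a$ via left cancellation of $b$ and right cancellation of $c$. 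Your route is slightly more elementary and self-contained (Theorem~\ref{thm1f} is never formally used, only cited as motivation); the paper's route has the advantage of making the parallel with the operator case structurally explicit. Both are sound, and your observation that the weight $f$ plays no analytic role---because both products required to lie in $H(A^f)$ collapse to $1$---is a nice point that the paper does not make explicit.
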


\begin{proof} Consider the maps $L_a, L_b, L_c\in L(A)$. Note that
$L_a=L_bL_c$, $N(L_b)=\{0\}$ and $R(L_c)=A$. In addition, according to
\cite[Theorem 2.4]{BMDj}, $L_e$, $L_f$ and $L_h\in L(A)$ are three positive
and invertible operators. Now well, according to \cite[Theorem
2.8]{BMDj}, $(L_a)_{L_e,L_h}^{\dag}$ exists and $(L_a)_{L_e,L_h}^{\dag}=L_{a_{e,h}^{\dag}}\in L(A)$.
Consequently, according to Theorem \ref{thm1f}, $(L_b)_{L_e,L_f}^{\dag}$ exists
and $(L_b)_{L_e,L_f}^{\dag}L_b=I_A$, $(L_c)_{L_f,L_h}^{\dag}$
exists and $L_c(L_c)_{L_f,L_h}^{\dag}=I_A$,
$L_{a_{e,h}^{\dag}}=(L_c)_{L_f,L_h}^{\dag}(L_b)_{L_e,L_f}^{\dag}$,
$(L_b)_{L_e,L_f}^{\dag}=L_{ca^\dag_{e,h}}$ and
$(L_c)_{L_f,L_h}^{\dag}=L_{a^\dag_{e,h}b}$.

Let $c'=(L_c)_{L_f,L_h}^{\dag}(1)=a^\dag_{e,h}b$. Since
$L_c(L_c)_{L_f,L_h}^{\dag}=I_A$, $cc'=1$. In particular, $cc'c=c$,
$c'cc'=c'$ and $c'c=a^\dag_{e,h}bc=a^\dag_{e,h}a\in H(A^h)$.
Hence, $c^\dag_{f,h}$ exists, $c^\dag_{f,h}=c'=a^\dag_{e,h}b$ and
$cc^\dag_{f,h}=1$.

If $b'=ca^\dag_{e,h}$, then
$b'b=c(a^\dag_{e,h}b)=cc^\dag_{f,h}=1$. As a result, $bb'b=b$ and
$b'bb'=b'$. However, since $bb'=bca^\dag_{e,h}=aa^\dag_{e,h}\in H(A^e)$,
$b^\dag_{e,f}$ exists,
$b^\dag_{e,f}=b'=ca^\dag_{e,h}$ and $b^\dag_{e,f}b=1$.

Set $a'=c^\dag_{f,h}b^\dag_{e,f}$. According to what has been proved, $aa'a=a$, $a'aa'=a'$,
$aa'=bb^\dag_{e,f}$ and $a'a=c^\dag_{f,h}c$. Therefore, according to \cite[Theorem 2.5]{BMDj}
$a^\dag_{e,h}=a'=c^\dag_{f,h}b^\dag_{e,f}$. The remaining identities can be derived from statements
(i)-(ii).
\end{proof}

\indent In the following theorem, weighted Moore-Penrose invertible Banach algebra elements
of the form $a=bc$ will characterized.\par

\begin{theorem}\label{thm4f} Under the same hypotheses of Theorem
\ref{thm3f}, the following statements are
equivalent.
\begin{itemize}
\item[\rm (i)] The element $a$ is weighted EP with weights $e$ and $h$.

\item[\rm(ii)] $bb^\dag_{e,f}=c^\dag_{f,h}c$.

\item[\rm(iii)] $bA=c^\dag_{f,h}A$ and
$(b^\dag_{e,f})^{-1}(0)=c^{-1}(0)$.

\item[\rm(iv)] $(1-c^\dag_{f,h}c)b=0$ and $c(1-bb^\dag_{e,f})=0$.

\item[\rm(v)] $b^\dag_{e,f}(1-c^\dag_{f,h}c)=0$ and
$(1-bb^\dag_{e,f})c^\dag_{f,h}=0$.

\item[\rm(vi)] There exists $u\in A^{-1}$ such that $c=ub^\dag_{e,f}$ and
$b=c^\dag_{f,h}u$.

\item[\rm(vii)] There exist $u_1,u_2\in A$ such that $u_1A=A$, $u_2^{-1}(0)=\{0\}$,
$c=u_2b^\dag_{e,f}$ and $b=c^\dag_{f,h}u_1$.

\item[\rm(viii)] There exist $u_3, u_4, u_5,u_6\in A$ such that $c=u_5b^\dag_{e,f}$,
$b^\dag_{e,f}=u_6c$, $b=c^\dag_{f,h}u_3$ and $c^\dag_{f,h}=bu_4$.

\item[\rm(ix)] There exist $u_7,u_8\in A$ such that $u_7A=A$,
$u_8^{-1}(0)=\{0\}$, $b^\dag_{e,f}=u_8c$ and $c^\dag_{f,h}=bu_7$.

\item[\rm(x)] $a\in c^{\dag}_{f,h}A\cap Ab^{\dag}_{e,f}$.

\item[\rm(xi)] $a^{\dag}_{e,h}\in bA\cap Ac$.

\item[\rm(xii)] $bA^{-1}=c^{\dag}_{f,h}A^{-1}$ and $A^{-1}c= A^{-1} b^\dag_{e,f}$.

\item[\rm(xiii)] $b_{-1}(0)=(c^{\dag}_{f,h})_{-1}(0)$ and $Ac=Ab^\dag_{e,f}$.
\end{itemize}
\end{theorem}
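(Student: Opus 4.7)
The plan is to combine the left-regular-representation transfer already used in the proof of Theorem \ref{thm3f} with a small number of direct computations inside $A$. Since $L_a = L_b L_c$ with $L_b$ injective, $L_c$ surjective, and $L_e, L_f, L_h \in L(A)$ invertible and positive, Theorem \ref{thm3f} together with \cite[Theorem 2.8]{BMDj} gives $(L_a)_{L_e,L_h}^{\dag} = L_{a_{e,h}^{\dag}}$, $(L_b)_{L_e,L_f}^{\dag} = L_{b_{e,f}^{\dag}}$ and $(L_c)_{L_f,L_h}^{\dag} = L_{c_{f,h}^{\dag}}$, and $a$ is weighted EP with weights $e,h$ if and only if $L_a$ is weighted EP with weights $L_e, L_h$. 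Under the identifications $R(L_x) = xA$ and $N(L_x) = x^{-1}(0)$, conditions (i)--(v) of the present theorem correspond word for word to (i)--(v) of Theorem \ref{thm2f}, so the equivalences among (i)--(v) follow at once from that theorem.

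Statement (vi) is not read off directly from Theorem \ref{thm2f}(vi), since an isomorphism of $L(A)$ need not be left multiplication by an invertible element of $A$. I would instead imitate the operator choice $U = CB$ by setting $u := cb$ and $v := b_{e,f}^{\dag} c_{f,h}^{\dag}$; assuming (i), hence (ii), and combining with the normalizations $b_{e,f}^{\dag} b = c c_{f,h}^{\dag} = 1$ of Theorem \ref{thm3f}, one computes $u b_{e,f}^{\dag} = c (b b_{e,f}^{\dag}) = c c_{f,h}^{\dag} c = c$, $c_{f,h}^{\dag} u = (c_{f,h}^{\dag} c) b = b b_{e,f}^{\dag} b = b$, $uv = c (b b_{e,f}^{\dag}) c_{f,h}^{\dag} = c c_{f,h}^{\dag} c c_{f,h}^{\dag} = 1$ and symmetrically $vu = 1$, so $u \in A^{-1}$. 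The implications (vi) $\Rightarrow$ (vii), (viii), (ix) are immediate, and each converse reduces to (iii) by double inclusion: from $b = c_{f,h}^{\dag} u_1$ with $u_1 A = A$ one obtains $bA = c_{f,h}^{\dag} A$, while from $c = u_2 b_{e,f}^{\dag}$ with $u_2$ left cancellable one obtains $c^{-1}(0) = (b_{e,f}^{\dag})^{-1}(0)$.

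For (x), the forward direction uses $b = c_{f,h}^{\dag} c b$ (from (iv)) and $c = c b b_{e,f}^{\dag}$ (from (v)) to place $a = bc$ simultaneously in $c_{f,h}^{\dag} A$ and $A b_{e,f}^{\dag}$. Conversely, from $a = c_{f,h}^{\dag} x = y b_{e,f}^{\dag}$ the identities $c c_{f,h}^{\dag} = b_{e,f}^{\dag} b = 1$ give $c_{f,h}^{\dag} c a = a$ and $a b b_{e,f}^{\dag} = a$; substituting $a = bc$ and cancelling one outer factor on the right by $c_{f,h}^{\dag}$ or on the left by $b_{e,f}^{\dag}$ recovers the two halves of (iv). Statement (xi) is handled in the same way after substituting $a_{e,h}^{\dag} = c_{f,h}^{\dag} b_{e,f}^{\dag}$ and invoking (iv)--(v).

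Finally, (i) $\Rightarrow$ (xii) and (i) $\Rightarrow$ (xiii) follow from (vi) by pushing the invertible $u$ through: $bA^{-1} = c_{f,h}^{\dag} u A^{-1} = c_{f,h}^{\dag} A^{-1}$, $A^{-1} c = A^{-1} u b_{e,f}^{\dag} = A^{-1} b_{e,f}^{\dag}$, $Ac = A u b_{e,f}^{\dag} = A b_{e,f}^{\dag}$, and $b_{-1}(0) = (c_{f,h}^{\dag})_{-1}(0)$ is read off from $b = c_{f,h}^{\dag} u$ because right multiplication by $u \in A^{-1}$ is bijective. Conversely, (xii) produces $b = c_{f,h}^{\dag} v$ and $c = w b_{e,f}^{\dag}$ with $v, w \in A^{-1}$, whence both halves of (iii) follow immediately; and (xiii) $\Rightarrow$ (iv) is obtained by observing that $(1 - c_{f,h}^{\dag} c) c_{f,h}^{\dag} = 0$ places $1 - c_{f,h}^{\dag} c$ in the common left annihilator $(c_{f,h}^{\dag})_{-1}(0) = b_{-1}(0)$, so $(1 - c_{f,h}^{\dag} c) b = 0$, while $c \in Ac = A b_{e,f}^{\dag}$ writes $c = z b_{e,f}^{\dag}$ and hence $c(1 - b b_{e,f}^{\dag}) = z(b_{e,f}^{\dag} - b_{e,f}^{\dag} b b_{e,f}^{\dag}) = 0$. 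The main difficulty will be book-keeping: (xii) and (xiii) each combine a one-sided condition corresponding to one half of (iii) with a one-sided condition corresponding to the other half, so each component of the hypothesis must be carefully aligned with the correct piece of the target.
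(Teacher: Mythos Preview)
Your proposal is correct and follows essentially the same strategy as the paper: the left-regular-representation transfer plus Theorem~\ref{thm2f} for (i)--(v), the explicit choice $u=cb$ (mirroring $U=CB$) for (vi)--(ix), and direct algebraic manipulation for (x)--(xiii). The paper's handling of (x)--(xiii) differs only cosmetically---it routes (x) through $a\in a^{\dag}_{e,h}A\cap Aa^{\dag}_{e,h}$ and a range/kernel argument, derives (xi) by applying the (i)$\Leftrightarrow$(x) equivalence to $a^{\dag}_{e,h}$ via $(a^{\dag}_{e,h})^{\dag}_{h,e}=a$, and treats (xiii) with right-multiplication operators---whereas your direct reductions to (iv) or (v) are equally valid and arguably cleaner; the only slip is that $c=cbb^{\dag}_{e,f}$ is the second half of (iv), not (v).
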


\begin{proof} As in the proof of Theorem \ref{thm3f}, let $T=L_a$, $B=L_b$, $C=L_c\in L(A)$ and note that
$T=BC$, $N(B)=\{0\}$ and $R(C)=A$. In addition,
$E=L_e$, $F=L_f$, $H=L_h\in L(A)$ are three
invertible positive operators (\cite[Lemma 2.4]{BMDj}). Therefore, according to \cite[Theorem 2.8]{BMDj} and Theorem \ref{thm1f}, $T,B,C$ are weighted
Moore--Penrose invertible and $T^\dag_{E,H}=L_{a_{e,h}^{\dag}}$,
$B^\dag_{E,F}=L_{b_{e,f}^{\dag}}$ and
$C^\dag_{F,H}=L_{c_{f,h}^{\dag}}$. Now well, according to Theorem \ref{thm2f}, statements (i)-(v) are
equivalent. To prove the equivalence of statements (i) and (vi)-(ix), adapt the
proof of Theorem \ref{thm2f} to the case under consideration.\par

\indent Suppose that statement (i) holds. Note that a straightforward calculation, using in particular
statement (vi), proves that
that $a\in c^{\dag}_{f,h}A\cap Ab^{\dag}_{e,f}$.
On the other hand, if statement (x) holds, then, according to Theorem \ref{thm3f},
$a\in a^{\dag}_{e,h}A\cap Aa^{\dag}_{e,h}$. In particular,
$aA\subseteq a^{\dag}_{e,h}A$ and $(a^{\dag}_{e,h})^{-1}(0)\subseteq a^{-1}(0)$.
However, an argument similar to the one in Theorem \ref{thm2f} (statement (iv)
implies statement (i)), using in particular \cite[Remark 2.6]{BMDj}, proves that
statement (i) holds. \par

\indent Note that according to \cite[Remark 2.6(i)]{BMDj},
$(a^\dag_{e,h})^{\dag}_{h,e}=a$. In addition, according to Theorem
\ref{thm3f}, $(b_{e,f}^{\dag})^{\dag}_{f,e}=b$ and
$(c_{f,h}^{\dag})^{\dag}_{h,f}=c$. What is more, it is clear that
$a$ is weighted EP with weights $e$ and $h$ if and only if
$a^\dag_{e,h}$ is weighted EP with weights $h$ and $e$. Therefore,
to prove the equivalence between statements (i) and (xi), it is
enough to apply the equivalence between statements (i) a (x) to
$a^\dag_{e,h}=c^\dag_{f,h}b^\dag_{e,f}$.\par

\indent Clearly, statement (vi) and (xii) are equivalent.\par

\indent Observe that statement (i) is equivalent to $N(R_{aa^\dag_{e,h}})=N(R_{a^\dag_{e,h}a})$ and
$R(R_{aa^\dag_{e,h}})=R(R_{a^\dag_{e,h}a})$. In addition, it is not difficult to prove that
$R(R_{aa^\dag_{e,h}})= R(R_{bb^\dag_{e,f}})=Ab^\dag_{e,f}$ and
$R(R_{a^\dag_{e,h}a})=R(R_{c_{f,h}^{\dag}c})=Ac$; similarly, $(aa^\dag_{e,h})_{-1}(0)=(bb^{\dag}_{e,f})_{-1}(0)=b_{-1}(0)$
and $(a^\dag_{e,h}a)_{-1}(0)= (c_{f,h}^{\dag}c)_{-1}(0)=(c_{f,h}^{\dag})_{-1}(0)$ (use the fact that $a^\dag_{e,h}$
(respectively $b^\dag_{e,f}$, $c_{f,h}^{\dag}$) is a normalized generalized inverse of $a$ (respectively $b$, $c$).
Therefore, statement (i) and (xiii) are equivalent.
\end{proof}

\section{Factorization $a^\dag_{e,f}=sa$}

\indent In the following theorem, given a unital Banach algebra
$A$, elements of the form $a^\dag_{e,f}=sa$ will be characterized
($a$, $s\in A$, $e$, $f\in A$ invertible and positive, $a$
weighted Moore-Penrose invertible with weights $e$ and $f$).
Recall that according to \cite[Remark 2.6]{BMDj},
$a_{e,f}^{\dag}A=a_{e,f}^{\dag}aA$, $aa_{e,f}^{\dag}A=aA$,
$(aa_{e,f}^{\dag})^{-1}(0) =(a_{e,f}^{\dag})^{-1}(0)$, $(a_{e,f}^{\dag}a)^{-1}(0) =a^{-1}(0)$ and
$A=a_{e,f}^{\dag}A\oplus a^{-1}(0)= aA\oplus (a_{e,f}^{\dag})^{-1}(0)$.\par

\begin{theorem}
Let $A$ be a unital Banach algebra and consider two invertible positive elements $e,f\in A$.
Let $a\in A$ such that $a^\dag_{e,f}$ exists. Then the
following statements are equivalent.
\begin{itemize}

\item[\rm(i)] The element $a$ is weighted EP with weights $e$ and $f$.

\item[\rm(ii)] There exist $s,t\in A$ such that $s^{-1}(0)=\{0\}$, $tA=A$ and
$a^\dag_{e,f}=sa=at$.

\item[\rm(iii)] There exist $s_1,t_1\in A$ such that $a^\dag_{e,f}=s_1a=at_1$.

\item[\rm(iv)] There exist $u,v,u_1,v_1\in A$ such that
$a^\dag_{e,f}a=ua^\dag_{e,f}=av$ and
$aa^\dag_{e,f}=a^\dag_{e,f}u_1=v_1a$.

\item[\rm(v)] There exist $u_2,v_2,u_3,v_3\in A$ such that
$a^\dag_{e,f}a=u_2a^\dag_{e,f}$, $aa^\dag_{e,f}=a^\dag_{e,f}u_3$ and
$a^\dag_{e,f}=av_2=v_3a$.

\item[\rm(vi)] There exist $x,y\in A^{-1}$ such that
$a^\dag_{e,f}a=xaa^\dag_{e,f}=aa^\dag_{e,f}y$.

\item[\rm(vii)] There exist $x_1,y_1\in A$ such that $x_1^{-1}(0)=\{0\}$,
$y_1A=A$ and $a^\dag_{e,f}a=x_1aa^\dag_{e,f}=aa^\dag_{e,f}y_1$.

\item[\rm(viii)] There exist $x_2,y_2\in A$
such that $x_2^{-1}(0)=\{0\}$ and $a^\dag_{e,f}a=x_2aa^\dag_{e,f}=aa^\dag_{e,f}y_2$.

\item[\rm(ix)] There exist $z_1$, $z_2\in A$ such that $a^\dag_{e,f}a=az_1a^\dag_{e,f}$ and
$aa^\dag_{e,f}=a^\dag_{e,f}z_2a$.
\end{itemize}
\end{theorem}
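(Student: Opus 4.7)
The equivalences split naturally into three pieces, which I would prove separately: the core triangle $(i)\Rightarrow(ii)\Rightarrow(iii)\Rightarrow(i)$; the equivalences of $(iv)$, $(v)$ and $(ix)$ with $(i)$, each routed through $(iii)$; and the closing cycle $(i)\Rightarrow(vi)\Rightarrow(vii)\Rightarrow(viii)\Rightarrow(i)$. I expect the two genuinely nontrivial steps to be $(iii)\Rightarrow(i)$ and $(viii)\Rightarrow(i)$; the remaining implications are either obvious substitutions or direct computations using only the Moore--Penrose identities $aa^{\dag}_{e,f}a=a$ and $a^{\dag}_{e,f}aa^{\dag}_{e,f}=a^{\dag}_{e,f}$.

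The main obstacle is $(iii)\Rightarrow(i)$. Writing $a^{\dag}_{e,f}=s_1a=at_1$ and regrouping yields $(a^{\dag}_{e,f})^2=s_1(aa^{\dag}_{e,f})=(a^{\dag}_{e,f}a)t_1$. Using $aa^{\dag}_{e,f}a=a$ these rewrite as
\[(a^{\dag}_{e,f})^2\,a=s_1(aa^{\dag}_{e,f})a=s_1a=a^{\dag}_{e,f},\qquad a\,(a^{\dag}_{e,f})^2=a(a^{\dag}_{e,f}a)t_1=at_1=a^{\dag}_{e,f}.\]
Associating the triple product $a\cdot(a^{\dag}_{e,f})^2\cdot a$ in its two possible ways then forces
\[aa^{\dag}_{e,f}=a\bigl((a^{\dag}_{e,f})^2\,a\bigr)=\bigl(a\,(a^{\dag}_{e,f})^2\bigr)a=a^{\dag}_{e,f}a,\]
so $a$ is weighted EP.

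For $(i)\Rightarrow(ii)$ I would take $s=t=(a^{\dag}_{e,f})^2+(1-a^{\dag}_{e,f}a)$ (under EP this equals $(a^{\dag}_{e,f})^2+(1-aa^{\dag}_{e,f})$); the group-inverse identities $(a^{\dag}_{e,f})^2a=a^{\dag}_{e,f}=a(a^{\dag}_{e,f})^2$ immediately give $sa=at=a^{\dag}_{e,f}$. To verify $s^{-1}(0)=\{0\}$, suppose $sx=0$; left-multiplication by $a^{\dag}_{e,f}a$, using $(a^{\dag}_{e,f}a)(a^{\dag}_{e,f})^2=(a^{\dag}_{e,f})^2$, yields $(a^{\dag}_{e,f})^2x=0$ and $x=a^{\dag}_{e,f}ax$; then left-multiplying $(a^{\dag}_{e,f})^2x=0$ by $a^2$ and using $a^2(a^{\dag}_{e,f})^2=aa^{\dag}_{e,f}$ forces $aa^{\dag}_{e,f}x=0$, so $x=0$. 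The condition $tA=A$ reduces to the direct verification that $t\bigl(a^2y+(1-aa^{\dag}_{e,f})y\bigr)=y$ for every $y\in A$. The step $(ii)\Rightarrow(iii)$ is trivial. The converses $(iv)\Rightarrow(iii)$, $(v)\Rightarrow(iii)$ and $(ix)\Rightarrow(iii)$ all follow by substituting the hypothesis into $a^{\dag}_{e,f}=a^{\dag}_{e,f}aa^{\dag}_{e,f}$; for instance from $(iv)$ one obtains $a^{\dag}_{e,f}=(a^{\dag}_{e,f}a)a^{\dag}_{e,f}=(av)a^{\dag}_{e,f}=a(va^{\dag}_{e,f})$ and, symmetrically, $a^{\dag}_{e,f}=(a^{\dag}_{e,f}v_1)a$. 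The forward directions $(i)\Rightarrow(iv),(v),(ix)$ are immediate with the natural choices $u=u_1=a$, $v=v_1=a^{\dag}_{e,f}$, $z_1=z_2=1$.

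For the final cycle, $(i)\Rightarrow(vi)$ holds with $x=y=1$, and $(vi)\Rightarrow(vii)\Rightarrow(viii)$ are trivial. The second nontrivial step is $(viii)\Rightarrow(i)$: set $r=aa^{\dag}_{e,f}$ and $s=a^{\dag}_{e,f}a$. From $s=aa^{\dag}_{e,f}y_2$ I would compute $rs=(aa^{\dag}_{e,f})^2y_2=aa^{\dag}_{e,f}y_2=s$; from $s=x_2r$ together with the idempotency $s^2=s$ I would obtain $x_2(rx_2r)=(x_2r)^2=x_2r$, and the injectivity $x_2^{-1}(0)=\{0\}$ allows cancellation of $x_2$ on the left, giving $rx_2r=r$, i.e.\ $rs=r$. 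Hence $r=s$, which is $(i)$. The recurring technical trick in both nontrivial steps is the same one: extract the commutation of the two idempotents $aa^{\dag}_{e,f}$ and $a^{\dag}_{e,f}a$ by computing a single natural product in two different ways.
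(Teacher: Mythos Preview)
Your proof is correct and complete, but it proceeds along a genuinely different route from the paper's own argument. The paper's proof is structural: for $(iii)\Rightarrow(i)$ it observes that $a^{\dag}_{e,f}=s_1a=at_1$ forces $a^{\dag}_{e,f}A\subseteq aA$ and $a^{-1}(0)\subseteq (a^{\dag}_{e,f})^{-1}(0)$, then invokes the direct-sum decompositions $A=a^{\dag}_{e,f}A\oplus a^{-1}(0)=aA\oplus (a^{\dag}_{e,f})^{-1}(0)$ from \cite[Remark~2.6]{BMDj} to upgrade these inclusions to equalities, and finally cites \cite[Theorem~3.7(ii)]{BMDj}. The same range/kernel machinery is reused for $(iv)\Rightarrow(i)$, $(v)\Rightarrow(i)$, $(viii)\Rightarrow(i)$ and $(ix)\Rightarrow(i)$; and for $(i)\Rightarrow(ii)$ the paper quotes \cite[Theorem~3.7(xiv)]{BMDj} to obtain $a\in a^{\dag}_{e,f}A^{-1}\cap A^{-1}a^{\dag}_{e,f}$ rather than constructing $s,t$ explicitly.

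Your argument, by contrast, is purely computational and self-contained: the associativity trick $a(a^{\dag}_{e,f})^2a$ for $(iii)\Rightarrow(i)$ and the idempotent computation $rs=s$, $rs=r$ for $(viii)\Rightarrow(i)$ avoid any appeal to the range/kernel decomposition or to external characterizations. This buys you a proof that works verbatim in any ring with identity, not just a Banach algebra, and that does not lean on the companion paper. The paper's approach, on the other hand, makes transparent that all nine conditions are really encoding the single equality $aA=a^{\dag}_{e,f}A$ together with $a^{-1}(0)=(a^{\dag}_{e,f})^{-1}(0)$, which is the conceptual thread running through \cite{BMDj}. One small efficiency remark on your $(i)\Rightarrow(ii)$: the element $s=(a^{\dag}_{e,f})^2+(1-a^{\dag}_{e,f}a)$ is in fact two-sided invertible with inverse $a^2+(1-aa^{\dag}_{e,f})$, so a one-line verification of $s\cdot\bigl(a^2+(1-aa^{\dag}_{e,f})\bigr)=1$ would replace your separate injectivity and surjectivity checks.
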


\begin{proof} If statement (i) holds, according to \cite[Theorem 3.7(xiv)]{BMDj},
$a\in a^\dagger_{e,f} A^{-1}\cap A^{-1}
a^\dagger_{e,f}$. In particular, there
exist $s,t\in A^{-1}$ such that $a^\dag_{e,f}=sa=at$, which implies
statement (ii). \par

\indent Note that statement (ii) clearly implies statement (iii).
On the other hand, if statement (iii) holds, then
$a^\dag_{e,f}A\subset aA$ and $a^{-1}(0)\subset
(a^\dag_{e,f})^{-1}(0)$. However, since $A=a_{e,f}^{\dag}A\oplus a^{-1}(0)= aA\oplus (a_{e,f}^{\dag})^{-1}(0)$
(\cite[Remark 2.6(iv)]{BMDj}), it is not difficult to prove that
$a^\dag_{e,f}A= aA$ and $a^{-1}(0)=(a^\dag_{e,f})^{-1}(0)$.
Therefore, according to \cite[Theorem 3.7(ii)]{BMDj}, statement (i) holds.\par

\indent Clearly, statement (i) implies statement (iv).
On the other hand, if statement (iv) holds, a straightforward
calculation, using in particular \cite[Remark 2.6]{BMDj}, proves that
$aA=a^\dag_{e,f}A$ and $a^{-1}(0)=(a^\dag_{e,f})^{-1}(0)$.
In particular, $R(L_{aa^\dag_{e,f}})=R(L_{a^\dag_{e,f}a})$ and
$N(L_{aa^\dag_{e,f}})=N(L_{a^\dag_{e,f}a})$. Since, $aa^\dag_{e,f}$ and
$a^\dag_{e,f}a$ are idempotents,
$a$ is weighted EP with weights $e$ and $h$.\par

A similar argument proves the equivalence between statements (i) and (v).\par

Clearly, (i) $\Rightarrow$ (vi) $\Rightarrow$ (vii) $\Rightarrow$
(viii). Suppose that (viii) holds. Then, according to \cite[Remark
2.6]{BMDj}, $a^\dag_{e,f}A\subseteq aA$ and $a^{-1}(0)\subseteq
(a^\dag_{e,f})^{-1}(0)$. However,  using an argument similar to
the one in the proof of Theorem \ref{thm4f} (statement (x) implies
statement (i)), it is easy to prove that statement (i) holds.\par

It is clear that statement (i) implies statement (ix). On the
other hand, if statement (ix) holds, then $a^\dag_{e,f}A=aA$ and
$(a^\dag_{e,f})^{-1}(0)=a^{-1}(0)$. To conclude the proof, proceed
as before (statement (iv) implies statement (i)).
\end{proof}

\section{ Factorization of the form $a=ucu^{-1}$}

\noindent Let $H$ be a Hilbert space and consider $T\in L(H)$. It is well known that necessary and sufficient for $T$ to be EP
is that there exists an orthogonal idempotent $P\in L(H)$ such that if $H_1=R(P)$, $H_2=N(P)$, $H=H_1\oplus H_2$ and
$T_1=T\mid_{H_1}^{H_1}\colon H_1\to H_1$, then $T_1\in L(H_1)$ is an isomorphism and
with respect to the aforementioned decomposition, $T$ can be presented as
$$
T= \begin{pmatrix} T_1&0\\
             0&0\\
\end{pmatrix},
$$
see for example \cite[Theorem 2.1]{DKS}. The case of EP Banach space operators was considered in
\cite[Section 5]{B3}. Next this kind of factorization will be studied for
weighted EP bounded and linear maps. However, in first place some preparation is needed.\par

\indent In the following proposition, given two Banach spaces $X_1$ and $X_2$, $X_1\oplus_1X_2$ will denote the Banach space
$X_1\oplus X_2$ with the $1$-norm, i.e., $\parallel x_1\oplus x_2\parallel=\parallel x_1\parallel + \parallel x_2\parallel$.
\par

\begin{proposition}\label{prop5.2}Let $X_1$ and $X_2$ be two Banach spaces and consider $T_1\in L(X_1)$
an isomorphic operator. Let $X$ be a Banach space and consider $E$, $F$, $T\in L(X)$ such that $E$ and $F$ are two invertible
positive operators and there exists
a linear and bounded isomorphism $J\colon X_1\oplus_1 X_2\to X$ with the property
$T=J(T_1\oplus 0)J^{-1}$. Consider $T'=J(T_1^{-1}\oplus 0)J^{-1}\in L(X)$. Then, the following statements are equivalent.\par
\noindent \rm (i) \it $T$ is weighted Moore-Penrose invertible with weights $E$ and $F$ and $T^\dag_{E,F}=T'$.\par
\noindent \rm (ii) \it $T$ is weighted EP with weights $E$ and $F$ and $T^\dag_{E,F}=T'$.\par
\noindent \rm (iii) \it $Q_1=J(I_{X_1}\oplus 0)J^{-1}\in H(L(X)^E)$ and $Q_2=J(0\oplus I_{X_2})J^{-1}\in H(L(X)^F)$.
 \end{proposition}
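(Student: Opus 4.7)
The plan is to exploit the block diagonal structure $T = J(T_1 \oplus 0)J^{-1}$ to reduce everything to purely algebraic computations, and then use the elementary observation that $Q_1 + Q_2 = J(I_{X_1\oplus X_2})J^{-1} = I_X$.

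First, I would carry out the block calculations. Since $(T_1 \oplus 0)(T_1^{-1} \oplus 0) = (T_1^{-1} \oplus 0)(T_1 \oplus 0) = I_{X_1} \oplus 0$, conjugation by $J$ yields
\[
TT' \;=\; T'T \;=\; J(I_{X_1}\oplus 0)J^{-1} \;=\; Q_1,
\]
from which $TT'T = Q_1 T = T$ and $T'TT' = T' Q_1 = T'$ follow immediately. Thus $T'$ is a normalized generalized inverse of $T$ that moreover commutes with $T$, and the only remaining content of Definition \ref{def4} is the hermitian condition on $TT' = Q_1$ and $T'T = Q_1$.

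Next I would handle the equivalence (i) $\Leftrightarrow$ (iii). Statement (i) is, by the computation above and the uniqueness Lemma \ref{lem5}, exactly the assertion that $Q_1 \in H(L(X)^E)$ and $Q_1 \in H(L(X)^F)$. On the other hand, because $I_X$ is hermitian in every Banach algebra (as $\|\exp(itI_X)\|=1$) and because $H(B)$ is a real linear subspace of any unital Banach algebra $B$, the identity $Q_2 = I_X - Q_1$ shows that $Q_1 \in H(L(X)^F) \Leftrightarrow Q_2 \in H(L(X)^F)$. Therefore (iii) is also equivalent to "$Q_1 \in H(L(X)^E)$ and $Q_1 \in H(L(X)^F)$", and (i) $\Leftrightarrow$ (iii) follows.

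Finally, (i) $\Leftrightarrow$ (ii) is essentially free: the implication (ii) $\Rightarrow$ (i) is by definition, while for (i) $\Rightarrow$ (ii) the relation $TT' = T'T$ established in the first step says precisely that $T$ commutes with $T^\dag_{E,F}$, so $T$ is weighted EP with weights $E$ and $F$. There is no real obstacle here; the only subtlety to watch is the implicit invocation that $H(L(X)^E)$ and $H(L(X)^F)$ are \emph{real} linear subspaces containing the identity, which justifies passing between $Q_1$ and $Q_2 = I_X - Q_1$ and which is the bridge that makes the symmetric-looking condition (iii) equivalent to the apparently asymmetric moore-Penrose condition in (i).
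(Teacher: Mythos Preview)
Your proof is correct and follows essentially the same approach as the paper: the paper also computes $TT'=T'T=Q_1=I_X-Q_2$ and then invokes \cite[Theorem 4.4(i)]{D} (which is precisely the fact that hermitian elements form a real subspace containing the identity, so $Q_1$ is hermitian iff $I_X-Q_1=Q_2$ is) to conclude the equivalence of (i)--(iii). Your argument is simply a more explicit unpacking of the same reasoning.
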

\begin{proof} It is not difficult to prove that $T'$ is a normalized generalized
inverse of $T$ and that $Q_1$ and $Q_2$ are the projections onto the closed and complemented
subspaces $J(X_1\oplus 0)$ and $J(0\oplus X_2)$ respectively. Furthermore, according to \cite[Theorem 4.4(i)]{D}, since $TT'=T'T=Q_1=I-Q_2$,
statements (i)-(iii) are equivalent.
\end{proof}

\indent In the following theorem, weighted $EP$ bounded and linear maps will be characterized.\par

\begin{theorem}\label{thm5.3} Let $X$ be a Banach space and consider $E$, $F\in L(X)$ two invertible positive operators.
Let $T\in L(X)$. Then, the following
statements are equivalent.\par
\noindent \rm (i) \it $T$ is weighted EP with weights $E$ and $F$,\par
\noindent\rm (ii) \it There exist two Banach spaces $X_1$ and $X_2$, $T_1\in L(X_1)$ an isomorphic
operator, and $J\colon X_1\oplus_1 X_2\to X$ a linear and bounded isomorphism such that
$T=J(T_1\oplus 0)J^{-1}$, $T^\dag_{E,F}=J(T_1^{-1}\oplus 0)J^{-1}$, $J(I_{X_1}\oplus 0)J^{-1}\in H(L(X)^E)$ and $J(0\oplus I_{X_2})J^{-1}\in H(L(X)^F)$.\par
\end{theorem}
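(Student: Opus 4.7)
The plan is to derive (ii) $\Rightarrow$ (i) by a direct appeal to Proposition \ref{prop5.2}: the two hermitian conditions on $J(I_{X_1}\oplus 0)J^{-1}$ and $J(0\oplus I_{X_2})J^{-1}$ listed in (ii) are precisely condition (iii) of Proposition \ref{prop5.2}, which is equivalent to condition (ii) there, namely that $T$ is weighted EP with weights $E$ and $F$.

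For the substantive direction (i) $\Rightarrow$ (ii), I would start from the group-inverse characterization recorded in the Remark after Definition \ref{def111}: since $T$ is weighted EP with weights $E$ and $F$, it is group invertible with $T^\sharp=T^\dag_{E,F}$, and because $T$ commutes with $T^\dag_{E,F}$ the idempotents $TT^\dag_{E,F}$ and $T^\dag_{E,F}T$ coincide. Call this common idempotent $P$. Then $X_1:=R(P)=R(T)$ and $X_2:=N(P)=N(T)$ are closed complementary subspaces with $X=X_1\oplus X_2$, and the summing map $J\colon X_1\oplus_1 X_2\to X$ given by $J(x_1,x_2)=x_1+x_2$ is a bounded linear bijection, hence an isomorphism by the open mapping theorem. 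Since $T$ commutes with $P$, the subspace $X_1$ is invariant under both $T$ and $T^\dag_{E,F}$, and the identity $TT^\dag_{E,F}=P=T^\dag_{E,F}T$ restricts on $X_1$ to $I_{X_1}$, so $T_1:=T|_{X_1}\in L(X_1)$ is an isomorphism with inverse $T^\dag_{E,F}|_{X_1}$.

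Next I would verify the two factorizations: decomposing any $x=x_1+x_2\in X_1\oplus X_2$ and using $T|_{X_2}=T^\dag_{E,F}|_{X_2}=0$, one checks directly that $T=J(T_1\oplus 0)J^{-1}$ and $T^\dag_{E,F}=J(T_1^{-1}\oplus 0)J^{-1}$. For the hermitian conditions, $J(I_{X_1}\oplus 0)J^{-1}$ is the projection onto $X_1$ along $X_2$, so it equals $P=TT^\dag_{E,F}$, which lies in $H(L(X)^E)$ by the weighted EP hypothesis. Likewise $J(0\oplus I_{X_2})J^{-1}=I_X-P=I_X-T^\dag_{E,F}T$ is the complementary idempotent to $T^\dag_{E,F}T\in H(L(X)^F)$, and hence also belongs to $H(L(X)^F)$ by \cite[Theorem 4.4(i)]{D}.

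The delicate step is arranging the inner decomposition cleanly: identifying $X_1$ and $X_2$ with $R(T)$ and $N(T)$, establishing their complementarity through the common idempotent $P=TT^\dag_{E,F}=T^\dag_{E,F}T$, and extracting an honest isomorphism $T_1\colon X_1\to X_1$ whose inverse is the restriction of $T^\dag_{E,F}$. Once this bookkeeping is in place the factorization formulas are immediate, and the hermitian conditions are inherited from the weighted EP data together with \cite[Theorem 4.4(i)]{D}.
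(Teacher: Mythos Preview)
Your proposal is correct and follows essentially the same line as the paper's proof. For (ii) $\Rightarrow$ (i) both invoke Proposition \ref{prop5.2}; for (i) $\Rightarrow$ (ii) the paper obtains the idempotent $P\in H(L(X)^E)\cap H(L(X)^F)$ with $R(P)=R(T)$, $N(P)=N(T)$ by citing \cite[Theorem 3.4(a)]{BMDj}, whereas you construct it directly as $P=TT^\dag_{E,F}=T^\dag_{E,F}T$ from the commutation hypothesis---but thereafter the construction of $X_1$, $X_2$, $T_1$, $J$ and the verification of the hermitian conditions via \cite[Theorem 4.4(i)]{D} are identical.
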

\begin{proof} According to Proposition \ref{prop5.2}, statement (ii) implies that $T$ is weighted EP
with weights $E$ and $F$.
On the other hand, if statement (i) holds, according to \cite[Theorem 3.4(a)]{BMDj},
there exist $P\in H(L(X)^E)\cap H(L(X)^F)$ such that $N(P)=N(T)$ and $R(P)=R(T)$. Denote then
$X_1=R(P)$, $X_2=N(P)$ and $T_1=T\mid_{X_1}^{X_1}\colon X_1\to X_1$. It is clear that $T_1\in L(X_1)$
is an isomorphism. Moreover, $J\colon X_1\oplus_1 X_2\to X$ is the map
$J(x_1\oplus x_2)= x_1+x_2$. Since $J^{-1}\colon X\to X_1\oplus_1 X_2$ is such that
$J^{-1}=P\oplus (I_X-P)$, $T=J(T_1\oplus 0)J^{-1}$, $T^\dag_{E,F}=J(T_1^{-1}\oplus 0)J^{-1}$, $ J(I_{X_1}\oplus 0)J^{-1}=P\in H(L(X)^E)$
and $ J(0\oplus I_{X_2})J^{-1}=I-P\in H(L(X)^F)$ (\cite[Theorem 4.4(i)]{D}).
\end{proof}

\indent Next weighted EP Banach space operators will be characterized in terms of injective and surjective bounded and linear
maps.\par

\begin{theorem}\label{thm5.4}Let $X$ be a Banach space and consider $E$ and $F$ two invertible positive operators.
Let $T\in L(X)$. Then, the
following statements are equivalent.\par
\noindent \rm (i) \it $T$ is weighted EP with weights $E$ and $F$.\par
\noindent \rm (ii) \it There exist Banach spaces $X_1$ and $X_2$, $T_1\in L(X_1)$ an isomorphism, $S\in L(X_1\oplus_1 X_2, X)$
injective, $U\in L(X,X_1\oplus_1 X_2)$ surjective, and an idempotent $P\in H(L(X)^E)\cap H(L(X)^F)$ such that
$T=S(T_1\oplus 0)U$, $R(P)=S(X_1\oplus 0)$ and $N(P)=U^{-1}(0\oplus X_2)$.
\end{theorem}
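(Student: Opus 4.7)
The plan is to prove both directions by relating the data in (ii) to the canonical $2\times 2$ decomposition supplied by Theorem \ref{thm5.3}.

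For (i) $\Rightarrow$ (ii), I apply Theorem \ref{thm5.3} to obtain Banach spaces $X_1, X_2$, an isomorphism $T_1\in L(X_1)$, and a bounded isomorphism $J\colon X_1\oplus_1 X_2\to X$ with $T=J(T_1\oplus 0)J^{-1}$, such that $J(I_{X_1}\oplus 0)J^{-1}\in H(L(X)^E)$ and $J(0\oplus I_{X_2})J^{-1}\in H(L(X)^F)$. I then set $S=J$ (injective, since $J$ is an isomorphism), $U=J^{-1}$ (surjective), and $P=J(I_{X_1}\oplus 0)J^{-1}$. The identity $R(P)=J(X_1\oplus 0)=S(X_1\oplus 0)$ is immediate, and $N(P)=J(0\oplus X_2)=J(J^{-1})^{-1}(0\oplus X_2)=U^{-1}(0\oplus X_2)$. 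To conclude, I must verify $P\in H(L(X)^E)\cap H(L(X)^F)$; the first membership is immediate, while the second follows from writing $P=I_X-J(0\oplus I_{X_2})J^{-1}$ and invoking \cite[Theorem 4.4(i)]{D}.

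For (ii) $\Rightarrow$ (i), the strategy is to extract from $(S,U,T_1,P)$ a factorization of the form required by Theorem \ref{thm5.3}, taking $R(P)$ and $N(P)$ as the internal summands. First I identify $R(T)$ and $N(T)$: since $S$ is injective and $T_1$ is an isomorphism of $X_1$, we have $R(T)=S(T_1\oplus 0)(X_1\oplus_1 X_2)=S(X_1\oplus 0)=R(P)$, and since $S$ is injective and $N(T_1\oplus 0)=0\oplus X_2$, we have $N(T)=U^{-1}(0\oplus X_2)=N(P)$. Consequently $X=R(P)\oplus N(P)=R(T)\oplus N(T)$ is a topological direct sum because $P$ is a bounded idempotent. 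Let $X_1'=R(T)$, $X_2'=N(T)$, and define $J'\colon X_1'\oplus_1 X_2'\to X$ by $J'(y_1\oplus y_2)=y_1+y_2$; this is a linear bounded isomorphism with $J'^{-1}(x)=Px\oplus(I_X-P)x$.

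Let $T_1'=T\mid_{R(T)}^{R(T)}$. Since $T$ maps $X$ into $R(T)$ and vanishes on $N(T)$, the restriction $T_1'\colon R(T)\to R(T)$ is surjective; it is also injective because $R(T)\cap N(T)=\{0\}$. The open mapping theorem then gives that $T_1'\in L(X_1')$ is an isomorphism. A direct computation yields $T=J'(T_1'\oplus 0)J'^{-1}$, $J'(I_{X_1'}\oplus 0)J'^{-1}=P$, and $J'(0\oplus I_{X_2'})J'^{-1}=I_X-P$. By hypothesis $P\in H(L(X)^E)$, and \cite[Theorem 4.4(i)]{D} gives $I_X-P\in H(L(X)^F)$. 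Theorem \ref{thm5.3} then implies that $T$ is weighted EP with weights $E$ and $F$.

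The main subtlety — and the step I would take the most care with — is verifying that $T_1'$ is well-defined and bijective on $R(T)$; the crux is the decomposition $X=R(T)\oplus N(T)$, which is not a priori given by (ii) but follows only after identifying $R(T)=R(P)$ and $N(T)=N(P)$ from the injectivity of $S$, surjectivity of $U$, and invertibility of $T_1$. Everything else reduces to routine bookkeeping using Proposition \ref{prop5.2}, Theorem \ref{thm5.3} and the hermitian property of $I_X-P$.
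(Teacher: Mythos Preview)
Your proof is correct and, for (i) $\Rightarrow$ (ii), coincides with the paper's argument (including the use of \cite[Theorem 4.4(i)]{D} to get $P\in H(L(X)^F)$). For (ii) $\Rightarrow$ (i) you begin exactly as the paper does, by showing $R(T)=R(P)$ and $N(T)=N(P)$ --- one small slip: the identity $R(T)=S(T_1\oplus 0)(X_1\oplus_1 X_2)$ uses surjectivity of $U$, not injectivity of $S$, though you cite the right hypothesis in your closing paragraph. The only substantive difference is that, once $R(T)=R(P)$ and $N(T)=N(P)$ are in hand, the paper finishes in one line by invoking \cite[Theorem 3.4(a)]{BMDj}, whereas you rebuild the diagonal factorization $T=J'(T_1'\oplus 0)J'^{-1}$ and feed it back into Theorem \ref{thm5.3}. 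Both routes are valid; the paper's is simply one citation shorter and avoids the open mapping argument.
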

\begin{proof} If $T$ is weighted EP with weights $E$ and $F$, then let $X_1$, $X_2$ and $T_1\in L(X_1)$ be as in Theorem \ref{thm5.3} and define
$S=J\in L(X_1\oplus_1 X_2, X)$ and $U=J^{-1}\in L(X,X_1\oplus_1
X_2)$, $J$ as in Theorem \ref{thm5.3}. Moreover, define
$P=J(I_{X_1}\oplus 0)J^{-1}\in H(L(X)^E)\cap H(L(X)^F)$. Since
$R(P)=R(T)=S(X_1\oplus 0)$ and $N(P)=N(T)=U^{-1}(0\oplus X_2)$,
statement (ii) holds.\par

\indent On the other hand, if statement (ii) holds,
then $P$ is an idempotent such that $P\in H(L(X)^E)\cap H(L(X)^F)$, $R(P)=R(T)$ and $N(P)=N(T)$.
Therefore, according to \cite[Theorem 3.4(a)]{BMDj}, $T$ is weighted EP with weights $E$ and $F$.
\end{proof}

\indent As a corollary of Theorem \ref{thm5.3}, more characterizations of weighted EP Banach space operators
will be given.

\begin{corollary}\label{cor5.9}Let $X$ be a Banach space and consider $E$ and $F$ two invertible positive operators.
Let $T\in L(X)$. Then, the following statements are equivalent.\par
\noindent \rm (i) \it $T$ is weighted EP with weights $E$ and $F$.\par
\noindent \rm (ii) \it $T$ is weighted EP with weights $F$ and $E$.\par
\noindent \rm (iii) \it $T$ is weighted EP both with weights $E$ and $E$ and with weights $F$ and $F$.\par
\end{corollary}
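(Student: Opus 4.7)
The plan is to reduce all three conditions to a single symmetric criterion phrased in terms of the canonical projection onto $R(T)$ along $N(T)$, and to exploit the fact that an idempotent $P$ is hermitian in an associated algebra $L(X)^G$ if and only if its complement $I_X-P$ is, by \cite[Theorem 4.4(i)]{D}.

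First I would reformulate Theorem \ref{thm5.3} as follows: $T\in L(X)$ is weighted EP with weights $E$ and $F$ if and only if there exists an idempotent $P\in L(X)$ with $R(P)=R(T)$ and $N(P)=N(T)$ such that $P\in H(L(X)^E)$ and $I_X-P\in H(L(X)^F)$. This holds because in the decomposition of Theorem \ref{thm5.3}, $J(I_{X_1}\oplus 0)J^{-1}$ is precisely the projection onto $R(T)=J(X_1\oplus 0)$ along $N(T)=J(0\oplus X_2)$, and $J(0\oplus I_{X_2})J^{-1}=I_X-J(I_{X_1}\oplus 0)J^{-1}$. Applying \cite[Theorem 4.4(i)]{D} to the positive invertible operator $F$, the condition $I_X-P\in H(L(X)^F)$ is equivalent to $P\in H(L(X)^F)$. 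Hence $T$ is weighted EP with weights $E$ and $F$ if and only if there is an idempotent $P$ with $R(P)=R(T)$, $N(P)=N(T)$, and $P\in H(L(X)^E)\cap H(L(X)^F)$.

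This criterion is manifestly symmetric in $E$ and $F$, giving (i) $\Leftrightarrow$ (ii) at once. For (i) $\Rightarrow$ (iii), the idempotent $P$ witnessing weighted EP with weights $(E,F)$ automatically lies in $H(L(X)^E)$ (yielding weighted EP with weights $(E,E)$) and in $H(L(X)^F)$ (yielding weighted EP with weights $(F,F)$). For the converse (iii) $\Rightarrow$ (i), let $P_E$ and $P_F$ be the idempotents witnessing weighted EP with weights $(E,E)$ and $(F,F)$ respectively. Both have range $R(T)$ and kernel $N(T)$, and each witnessing forces the direct-sum decomposition $X=R(T)\oplus N(T)$; but an idempotent is uniquely determined by its range and kernel once the space splits this way, so $P_E=P_F$. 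This common idempotent then lies in $H(L(X)^E)\cap H(L(X)^F)$, and by the symmetric criterion $T$ is weighted EP with weights $E$ and $F$.

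The only subtle point is the uniqueness of the projection onto $R(T)$ along $N(T)$ used in (iii) $\Rightarrow$ (i), but this is immediate from the direct-sum decomposition guaranteed by each individual weighted EP hypothesis. Beyond that, the argument is purely a formal rearrangement of Theorem \ref{thm5.3} combined with the complement-invariance of hermiticity from \cite[Theorem 4.4(i)]{D}, so no serious obstacle arises.
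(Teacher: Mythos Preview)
Your proposal is correct and follows essentially the same route as the paper: both arguments reduce the question to Theorem~\ref{thm5.3}, identify $J(I_{X_1}\oplus 0)J^{-1}$ and $J(0\oplus I_{X_2})J^{-1}$ as complementary idempotents, and then invoke \cite[Theorem 4.4(i)]{D} to interchange the hermiticity conditions on $P$ and $I_X-P$, yielding the symmetry in $E$ and $F$. Your treatment of (iii) $\Rightarrow$ (i) via uniqueness of the projection onto $R(T)$ along $N(T)$ is more explicit than the paper's, which simply asserts the implication as clear; the underlying reason is the same, and indeed your symmetric criterion $P\in H(L(X)^E)\cap H(L(X)^F)$ with $R(P)=R(T)$, $N(P)=N(T)$ is exactly \cite[Theorem 3.4(a)]{BMDj}, which the paper uses elsewhere in this section.
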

\begin{proof} According to Theorem \ref{thm5.3}, statement (ii) is equivalent to the fact that
there exist two Banach spaces $X_1$ and $X_2$, $T_1\in L(X_1)$ an isomorphic
operator, and $J\colon X_1\oplus_1 X_2\to X$ a linear and bounded isomorphism such that
$T=J(T_1\oplus 0)J^{-1}$, $T^\dag_{E,F}=J(T_1^{-1}\oplus 0)J^{-1}$, $J(I_{X_1}\oplus 0)J^{-1}\in H(L(X)^E)$ and $J(0\oplus I_{X_2})J^{-1}\in H(L(X)^F)$.
Now well, according to \cite[Theorem 4.4(i)]{D}, $J(I_{X_1}\oplus 0)J^{-1}\in H(L(X)^E)$ if and only if
 $J(0\oplus I_{X_2})J^{-1}=I-J(I_{X_1}\oplus 0)J^{-1}\in H(L(X)^E)$. Similarly,
$J(0\oplus I_{X_2})J^{-1}\in H(L(X)^F)$ if and only if
$J(I_{X_1}\oplus 0)J^{-1}=I-J(0\oplus I_{X_2})J^{-1}\in H(L(X)^F)$. As a result, statement (i) and
(ii) are equivalent.\par
\indent Clearly, statements (i) and (ii) imply statement (iii), and
statement (iii) implies both statements (i) and (ii).
\end{proof}

\indent When Hilbert space are considered, Theorem \ref{thm5.3} reduces to the following corollary.\par

\begin{corollary}\label{cor5.10} Let $H$ be a Hilbert space and consider $T$, $E$ and $F\in L(H)$ such that
$E$ and $F$ are invertible positive. Then, the following statement are equivalent.\par
\noindent \rm (i) \it $T$ is weighted EP with weights $E$ and $F$.\par
\noindent \rm (ii) \it There exists an idempotent $P\in L(H)$ such that $E^{-1}P^*E=P$,
 $F^{-1}P^*F=P$, and if $H_1=R(P)$, $H_2=N(P)$ and $T_1=T\mid_{H_1}^{H_1}\colon H_1\to H_1$, then $H=H_1\oplus H_2$ and
with respect to the aforementioned decomposition, $T$ and $T^\dag_{E,F}$ can be presented as
$$
T= \begin{pmatrix} T_1&0\\ 0&0\\\end{pmatrix}, \hskip2truecm
T^\dag_{E,F}= \begin{pmatrix} T_1^{-1}&0\\ 0&0\\\end{pmatrix},
$$
respectively.
\end{corollary}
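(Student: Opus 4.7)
The plan is to deduce Corollary \ref{cor5.10} from Theorem \ref{thm5.3} by specialising to the Hilbert space setting and rewriting the abstract hermitian conditions of $L(H)^{E}$ and $L(H)^{F}$ in terms of the Hilbert space adjoint. The one non-routine ingredient is the translation of the weighted hermiticity of an idempotent into a concrete identity: since the map $\phi_{E}\colon L(H)^{E}\to L(H)$, $\phi_{E}(Q)=E^{1/2}QE^{-1/2}$, is an isometric isomorphism of Banach algebras (by the very definition of $\|\cdot\|_{E}$), $Q\in H(L(H)^{E})$ is equivalent to $\phi_{E}(Q)$ being hermitian in the $C^{*}$-algebra $L(H)$, hence self-adjoint. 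Because $(E^{1/2})^{*}=E^{1/2}$, a short computation yields $(E^{1/2}QE^{-1/2})^{*}=E^{-1/2}Q^{*}E^{1/2}$, so the self-adjointness of $\phi_{E}(Q)$ unfolds to $E^{-1}Q^{*}E=Q$; the analogous equivalence holds with $E$ replaced by $F$.

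For (i) $\Rightarrow$ (ii) I would apply Theorem \ref{thm5.3} to obtain Banach spaces $X_{1},X_{2}$, an isomorphism $T_{1}\in L(X_{1})$, and a bounded isomorphism $J\colon X_{1}\oplus_{1}X_{2}\to H$ with $T=J(T_{1}\oplus 0)J^{-1}$ and $T^{\dag}_{E,F}=J(T_{1}^{-1}\oplus 0)J^{-1}$. The idempotent $P=J(I_{X_{1}}\oplus 0)J^{-1}$ lies in $H(L(H)^{E})$ by that theorem, and Corollary \ref{cor5.9} (combined with \cite[Theorem 4.4(i)]{D}) places it also in $H(L(H)^{F})$. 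Since $P$ is a bounded idempotent on $H$, the subspaces $H_{1}=R(P)$ and $H_{2}=N(P)$ are closed and $H=H_{1}\oplus H_{2}$; under this decomposition $J$ is simply the sum map $x_{1}\oplus x_{2}\mapsto x_{1}+x_{2}$, so Theorem \ref{thm5.3} delivers exactly the block representations of $T$ and $T^{\dag}_{E,F}$ asserted in (ii). The identities $E^{-1}P^{*}E=P$ and $F^{-1}P^{*}F=P$ then follow from the translation established in the first paragraph.

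For (ii) $\Rightarrow$ (i), the two hypothesised identities make $P$ hermitian in both $L(H)^{E}$ and $L(H)^{F}$ by the same translation. The block forms of $T$ and $T^{\dag}_{E,F}$ with respect to $H=H_{1}\oplus H_{2}$ show that $R(P)=R(T)$, $N(P)=N(T)$, $TT^{\dag}_{E,F}=P=T^{\dag}_{E,F}T$, and that $T^{\dag}_{E,F}$ is a normalised generalised inverse of $T$; Definition \ref{def4} is therefore satisfied and $T$ is weighted EP with weights $E$ and $F$. The principal technical point, and the one that really needs care, is the adjoint translation recorded in the first paragraph; once it is in place, the remainder of the argument is bookkeeping on the block decomposition already furnished by Theorem \ref{thm5.3}.
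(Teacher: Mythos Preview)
Your proof is correct and follows the same route as the paper: apply Theorem \ref{thm5.3} and translate the conditions $P\in H(L(H)^{E})$, $P\in H(L(H)^{F})$ into the adjoint identities $E^{-1}P^{*}E=P$, $F^{-1}P^{*}F=P$ via the isometric isomorphism $Q\mapsto E^{1/2}QE^{-1/2}$ and the fact that hermitian elements of a $C^{*}$-algebra are exactly the self-adjoint ones. The paper's argument is a one-line reference to Theorem \ref{thm5.3} together with \cite[Proposition 20, Chapter I, Section 12]{BD} for this translation; your version simply spells out the same computation and the passage from $I-P\in H(L(H)^{F})$ to $P\in H(L(H)^{F})$ explicitly.
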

\begin{proof} Apply Theorem \ref{thm5.3} to the case under consideration and recall that according to
\cite[Proposition 20, Chapter I, section 12]{BD}, $P\in H(L(H)^E)$ (respectively $P\in H(L(H)^F)$)
if and only if $E^{-1}P^*E=P$ (respectively $F^{-1}P^*F=P$), see the discussion before \cite[Definition 2.3]{BMDj}.
\end{proof}
\indent Next the Banach algebra case will be studied. Recall that according to \cite[Theorem 1.4]{DKS},
if $A$ is a $C^*$-algebra, then $a\in A$ is EP if and only if there exists a hermitian idempotent $q\in A$ such that
$qa=a=aq$ and $a=qaq\in (qAq)^{-1}$, where given a Banach algebra $B$ and $p\in B$ such that $p^2=p$, then $pBp$ is a Banach algebra with unit $p$.\par

\begin{theorem}\label{thm5.5} Let $A$ be a unital Banach algebra and consider $e$, $f\in A$
two invertible positive elements. Let $a\in A$. Then the following statements are equivalent.\par
\noindent {\rm (i)}The element $a$ is weighted EP with weights $e$ and $f$.\par
\noindent {\rm (ii)} There exist $c\in A^{-1}$ and $p\in A$ such that $p=p^2$, $cpc^{-1}\in H(A^e)\cap H(A^f)$, $pap\in (pAp)^{-1}$ and $a=cpapc^{-1}$. \par
\noindent {\rm (iii)}The element $a$ is weighted EP with weights $f$ and $e$.\par
\noindent {\rm (iv)}The element $a$ is weighted EP both with weights $e$ and $e$ and with weights $f$ and $f$.
\end{theorem}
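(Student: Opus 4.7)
The plan is to establish (i) $\Leftrightarrow$ (ii) directly inside $A$ using the idempotent $p_0 = a\,a^\dagger_{e,f}$ as the main tool, and then to derive the symmetric equivalences (i) $\Leftrightarrow$ (iii) $\Leftrightarrow$ (iv) from a single hermiticity observation. A reduction through left multiplication to Theorem~\ref{thm5.3} is possible in spirit, but the isomorphism $J$ produced there need not take the form $L_c$ for any $c \in A$, so a direct algebraic argument is cleaner.

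For (i) $\Rightarrow$ (ii), I would set $p = a\,a^\dagger_{e,f}$ and $c = 1$. The weighted EP hypothesis yields $p = a\,a^\dagger_{e,f} = a^\dagger_{e,f}\,a$, so $p^2 = p$ and Definition~\ref{def3} gives $p \in H(A^e) \cap H(A^f)$. From $a\,a^\dagger_{e,f}\,a = a$ one reads $pa = ap = a$, hence $a = pap$. Moreover $a^\dagger_{e,f} = p\,a^\dagger_{e,f}\,p$ sits in $pAp$ and is a two-sided inverse of $pap = a$ in $pAp$, so $pap \in (pAp)^{-1}$. For (ii) $\Rightarrow$ (i), I would set $q = cpc^{-1}$ (an idempotent which by hypothesis lies in $H(A^e) \cap H(A^f)$), let $b$ denote the inverse of $pap$ inside $pAp$, and define $d = cbc^{-1}$. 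A short calculation using $a = cpapc^{-1}$ and $p^2 = p$ gives $ad = da = q$, $ada = a$ and $dad = d$. Since $q \in H(A^e) \cap H(A^f)$, the four axioms of Definition~\ref{def3} are verified, so $d = a^\dagger_{e,f}$; the equality $ad = da$ then shows $a$ is weighted EP with weights $e$ and $f$.

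The implications (i) $\Leftrightarrow$ (iii) $\Leftrightarrow$ (iv) rest on the observation that whenever $a$ is weighted EP with any pair of weights drawn from $\{e,f\}$, the relevant weighted Moore-Penrose inverse agrees with the unique group inverse $a^\sharp$, so $a\,a^\sharp = a^\sharp a$ is one fixed idempotent. Condition (i) becomes the single requirement $a\,a^\sharp \in H(A^e) \cap H(A^f)$, which is symmetric in $e$ and $f$ (giving (i) $\Leftrightarrow$ (iii)) and is tautologically the conjunction of the conditions $a\,a^\sharp \in H(A^e)$ and $a\,a^\sharp \in H(A^f)$ (giving (i) $\Leftrightarrow$ (iv)). One only needs to record that, under any of the hypotheses on $a^\sharp$, choosing $b = a^\sharp$ satisfies the off-diagonal hermiticity requirements in Definition~\ref{def3} because $ab = ba$ collapses both conditions into membership of the single idempotent $a\,a^\sharp$ in the appropriate $H(A^\cdot)$.

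The main obstacle I expect is the (ii) $\Rightarrow$ (i) step: the algebraic identities themselves are short, but one has to argue cleanly that the single hypothesis $cpc^{-1} \in H(A^e) \cap H(A^f)$ is exactly what is needed to feed the two hermiticity requirements on $ad$ and $da$ in Definition~\ref{def3}, via the identification $ad = da = cpc^{-1}$. Once this identification is made explicit, the remaining bookkeeping (checking $ada = a$, $dad = d$, and that $b$ really exists as the inverse of $pap$ in $pAp$) is routine.
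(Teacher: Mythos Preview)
Your argument is correct and matches the paper's proof almost line for line: for (i)~$\Leftrightarrow$~(ii) the paper also takes $p=aa^{\dag}_{e,f}$ and $c=1$ in the forward direction, and in the reverse direction builds $d=c(pbp)c^{-1}$ (your $cbc^{-1}$, since $b\in pAp$) and checks $ad=da=cpc^{-1}\in H(A^e)\cap H(A^f)$ exactly as you do. For (i)~$\Leftrightarrow$~(iii)~$\Leftrightarrow$~(iv) the paper simply points to the argument of Corollary~\ref{cor5.9}, while you invoke the group-inverse reformulation already recorded in the paper's remark following Definition~\ref{def111}; these are the same observation in different clothing.
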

\begin{proof} If $a\in A$ is weighted EP with weights $e$ and $f$, then $p=aa^{\dag}_{e,f}=a^{\dag}_{e,f}a$ and $c=1=p+(1-p)$
satisfy the conditions in statement (ii). Note that in this case $a=pap$ and $pa^{\dag}_{e,f}p=a^{\dag}_{e,f}\in pAp$ is the inverse of
$a=pap$ in the subalgebra $pAp$.\par

\indent On the other hand, suppose that statement (ii) holds. Since $pap\in (pAp)^{-1}$, there exists $b\in A$ such that $papbp=pbpap=p$.
As a result, $d=cpbpc^{-1}$ is a normalized generalized inverse of $a$
such that $ad=da=cpc^{-1}$. In fact,
\begin{align*}
ad&= acpbpc^{-1}= cpapc^{-1}cpbpc^{-1}= cpc^{-1},\\
da&= cpbpc^{-1}cpapc^{-1}=cpc^{-1},\\
ada&= acpc^{-1}= cpapc^{-1}cpc^{-1}=cpapc^{-1}=a\\
 dad&= cpc^{-1}cpbpc^{-1}=cpbpc^{-1}=d.\\
\end{align*}
Since $cpc^{-1}\in H(A^e)\cap H(A^f)$, $a^{\dag}_{e,f}$ exists and
$a$ is weighted EP with weights $e$ and $f$. In fact, $a^{\dag}_{e,f}=d$.\par
\indent To prove the equivalences among statement (i) and statements (iii) and (iv), use an
argument similar to the one in the proof of Corollary \ref{cor5.9}.
\end{proof}
\indent Under the same hypothesis of Theorem \ref{thm5.5}, when $e=f=1$, the following corollary
presents the case $a\in A$, $a$ an EP element, see \cite[section 5]{B3}.\par

\begin{theorem}\label{thm5.11} Let $A$ be a unital Banach algebra and consider $a\in A$. Then the following statements are equivalent.\par
\noindent {\rm (i)}The element $a$ is EP.\par
\noindent {\rm (ii)} There exist $c\in A^{-1}$ and $p\in A$ such that $p=p^2$, $cpc^{-1}\in H(A)$, $pap\in (pAp)^{-1}$ and $a=cpapc^{-1}$.
\end{theorem}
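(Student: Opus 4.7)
The plan is to derive Theorem \ref{thm5.11} as the $e=f=1$ specialization of Theorem \ref{thm5.5}. What needs checking is essentially bookkeeping about how all of the weighted notions collapse when the weight is the unit.

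First I would observe that $1\in A$ is invertible and positive (since $V(1)=\{1\}\subset\mathbb R_+$), and its unique positive square root is $1^{1/2}=1$. Consequently the norm $\|x\|_1=\|1^{1/2}x\,1^{-1/2}\|=\|x\|$, so $A^1=A$ as normed algebras and therefore $H(A^1)=H(A)$. In particular, Definition \ref{def3} with $e=f=1$ is literally Definition \ref{def2}, so $a^{\dag}_{1,1}$ exists if and only if $a^{\dag}$ exists, and they coincide. It follows from Definition \ref{def111} that $a$ is weighted EP with weights $1$ and $1$ if and only if $a$ is EP.

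Next I would apply Theorem \ref{thm5.5} with $e=f=1$. The equivalence of statements (i) and (ii) of Theorem \ref{thm5.5} then reads: $a$ is EP if and only if there exist $c\in A^{-1}$ and an idempotent $p\in A$ such that $cpc^{-1}\in H(A^1)\cap H(A^1)$, $pap\in(pAp)^{-1}$, and $a=cpapc^{-1}$. Since $H(A^1)\cap H(A^1)=H(A)$, this is precisely the content of statement (ii) of Theorem \ref{thm5.11}.

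There is no real obstacle in this argument; the only point that could trip one up is verifying that $1^{1/2}=1$ in the sense of the positive square root given by \cite[Lemma 7, Chapter V, Section 38]{BD} together with the uniqueness in \cite[Theorem]{G}, but this is immediate because $1$ is itself positive and squares to $1$, so uniqueness forces $1^{1/2}=1$. Everything else is a direct transcription of Theorem \ref{thm5.5}, so the proof reduces to a single appeal to that theorem after the observations above.
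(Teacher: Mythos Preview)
Your proposal is correct and follows exactly the paper's approach: the paper's proof is the one-line ``Apply Theorem \ref{thm5.5} to the case under consideration,'' and you have simply made explicit the routine verification that with $e=f=1$ one has $A^1=A$, $H(A^1)=H(A)$, and weighted EP with weights $1,1$ coincides with EP.
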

\begin{proof} Apply Theorem \ref{thm5.5} to the case under consideration.
\end{proof}

\indent As a corollary of Theorem \ref{thm5.5}, weighted EP $C^*$-algebra elements will be characterized.\par

\begin{corollary}\label{cor14} Let $A$ be a $C^*$-algebra and consider $e$, $f\in A$
two invertible positive elements. Let $a\in A$. Then the following statements are equivalent.\par
\noindent {\rm (i)}The element $a$ is weighted EP with weights $e$ and $f$.\par
\noindent {\rm (ii)} There exist $c\in A^{-1}$ and $p\in A$ such that $p=p^2$, $e(cpc^{-1})^*e^{-1}=cpc^{-1}$,
$f(cpc^{-1})^*f^{-1}=cpc^{-1}$ and $a=cpapc^{-1}$. \par
\end{corollary}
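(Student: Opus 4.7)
The plan is to derive this statement as a direct specialization of Theorem \ref{thm5.5} to the $C^*$-algebra setting. All the structural content of the factorization $a=cpapc^{-1}$ with $p$ idempotent and $pap$ invertible in $pAp$ is already supplied there; the only new ingredient is to replace the abstract condition $cpc^{-1}\in H(A^e)\cap H(A^f)$ by an explicit algebraic identity involving the involution of $A$ and the weights $e$ and $f$.

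First I would recall that, since $A$ is a $C^*$-algebra, the hermitian elements of $A$ coincide with the self-adjoint ones, by \cite[Proposition 20, Chapter I, Section 12]{BD}. For any invertible positive $u\in A$, the map $\phi_u\colon A^u\to A$ defined by $\phi_u(x)=u^{1/2}xu^{-1/2}$ is an isometric unital Banach algebra isomorphism: it is isometric by the very definition of $\parallel\cdot\parallel_u$, and it is multiplicative because the inner factors $u^{-1/2}u^{1/2}$ collapse. Hence $x\in H(A^u)$ if and only if $\phi_u(x)$ is hermitian in $A$, equivalently self-adjoint, i.e. $\phi_u(x)=\phi_u(x)^*$. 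Clearing the positive square roots in this identity, using that $u^*=u$ and $(u^{1/2})^*=u^{1/2}$, produces an algebraic relation between $x$, $x^*$, $u$ and $u^{-1}$. Applied with $x=cpc^{-1}$ and $u\in\{e,f\}$, this is exactly the pair of equations appearing in statement (ii).

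With this translation in hand, the proof reduces to invoking Theorem \ref{thm5.5}: statement (i) of the corollary is identical to statement (i) of that theorem, and statement (ii) of the corollary is obtained from statement (ii) of the theorem by rewriting the memberships $cpc^{-1}\in H(A^e)$ and $cpc^{-1}\in H(A^f)$ in their $C^*$-algebra form. No further work is needed, and I do not foresee a real obstacle; Theorem \ref{thm5.5} does all of the heavy lifting, and the remaining argument is purely symbolic manipulation exploiting the involution of $A$.
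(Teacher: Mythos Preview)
Your proposal is correct and follows essentially the same route as the paper: invoke Theorem \ref{thm5.5} and then translate the condition $cpc^{-1}\in H(A^e)\cap H(A^f)$ into an explicit $C^*$-identity via the isometric isomorphism $x\mapsto u^{1/2}xu^{-1/2}$ together with \cite[Proposition 20, Chapter I, Section 12]{BD}. The paper's own proof is even terser, simply instructing the reader to adapt the argument of Corollary \ref{cor5.10} with Theorem \ref{thm5.5} in place of Theorem \ref{thm5.3}.
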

\begin{proof} Adapt the proof of Corollary \ref{cor5.10} to the case under consideration applying
Theorem \ref{thm5.5} instead of Theorem \ref{thm5.3}.
\end{proof}

\bibliographystyle{amsplain}

\vskip.3truecm
\noindent Enrico Boasso\par
\noindent  E-mail address: enrico\_odisseo@yahoo.it\par
\vskip.3truecm
\noindent Dragan S. Djordjevi\'c\par
\noindent  E-mail address: dragan@pmf.ni.ac.rs\par
\vskip.3truecm
\noindent Dijana Mosi\'c\par
\noindent  E-mail address: dijana@pmf.ni.ac.rs\par
\end{document}